\begin{document}

\title{An inexact PAM method for computing Wasserstein barycenter with unknown supports
\thanks{This work is supported by the National Natural Science Foundation of China
 under project No.11971177 and Guangdong Basic and Applied Basic Research Foundation
(2020A1515010408).}}

\titlerunning{Inexact PAM method for barycenter with unknown supports}

\author{Yitian Qian \and Shaohua Pan}


\institute{Yitian Qian \at
          School of Mathematics, South China University of Technology, Guangzhou.
              \email{mayttqian@mail.scut.edu.cn}
           \and
           Shaohua Pan\at
           School of Mathematics, South China University of Technology, Guangzhou.
           \email{shhpan@scut.edu.cn}
}

\date{Received: date / Accepted: date}

\maketitle

 \begin{abstract}
  Wasserstein barycenter is the centroid of a collection of discrete probability
  distributions which minimizes the average of the $\ell_2$-Wasserstein distance.
  This paper focuses on the computation of Wasserstein barycenters under
  the case where the support points are free, which is known to be a severe
  bottleneck in the D2-clustering due to the large-scale and nonconvexity.
  We develop an inexact proximal alternating minimization (iPAM) method for
  computing an approximate Wasserstein barycenter, and provide its global
  convergence analysis. This method can achieve a good accuracy with a reduced
  computational cost when the unknown support points of the barycenter have
  low cardinality. Numerical comparisons with the 3-block B-ADMM in \cite{YeWWL17}
  and an alternating minimization method involving the LP subproblems
  on synthetic and real data show that the proposed iPAM can yield comparable
  even a little better objective values in less CPU time, and hence
  the computed barycenter will render a better role in the D2-clustering.
 \keywords{Wasserstein barycenter \and inexact PAM \and linearized ADMM \and KL property}
\end{abstract}

 \section{Introduction}\label{intro}

  In machine learning, many complex instances such as images, sequences,
 and documents can be described in terms of discrete probability distributions.
 For example, the bag of ``words'' data model used in multimedia retrieval and
 document analysis is a discrete distribution, while the widely used normalized histogram
 which contains the fixed supports and associated weights is also a special case of
 discrete distributions. In this paper, we focus on the computation of
 the centroid of discrete probability distributions under the Wasserstein distance
 (also known as the Mallows distance \cite{Mallows72} or the earth mover's distance \cite{Rubner00}),
 which is called the Wasserstein barycenter.

 Wasserstein barycenters are often used to cluster the discrete probability distributions
 in D2-clustering, which minimizes the total within-cluster variation under the Wasserstein distance
 similarly as Lloyd's K-means for vectors under the Euclidean distance. This clustering problem
 was originally explored by Li and Wang \cite{Li08} who coined the phrase D2-clustering.
 The motivations for using the Wasserstein distance in practice are strongly argued
 by some researchers in the literature (see, e.g., \cite{Li08,Cui13,Cui14,Pele09}),
 and its theoretical significance is well supported in the optimal transport literature \cite{Villani08}.
 In the D2-clustering framework, the Wasserstein barycenter is required for
 the case of unknown supports with a pre-given cardinality. To compute it, one faces
 a large-scale nonconvex optimization problem in which a coupled nonconvex objective
 function is minimized over a polyhedral set. Due to the advantages of Wasserstein distance,
 D2-clustering holds much promise, but the high computational cost of barycenter
 has limited its applications.

 To scale up the computation of Wasserstein barycenter in the D2-clustering,
 a divide-and-conquer approach has been proposed in \cite{Zhang15}, but the method
 is ad-hoc and lack of convergence guarantee. When an alternating minimization
 strategy is used, the computation of Wasserstein barycenter is decomposed into
 a quadratic program with a closed form solution and a linear program (LP) with
 a super-linear time-complexity on the number of samples $N$. The latter brings
 a big challenge for the computation of a barycenter because the number of variables
 in the LP grows quickly with the number of support points, and the classical LP solvers
 such as the simplex method and the interior point method are not scalable
 (see Figure \ref{fig3}, \ref{fig5} and \ref{fig7}). To overcome this challenge,
 Ye and Li \cite{YeLi14} applied the classical alternating direction method of multipliers
 (ADMM) for solving this LP, Wang and Banerjee \cite{Wang14} generalized the classical
 ADMM to the Bregman ADMM (B-ADMM) by replacing the quadratic distance with a general
 Bregman distance so as to exploit the structure of the LP, and Yang et al. \cite{Yang18}
 recently proposed a very efficient dual solver for the LP by adopting a symmetric
 Gauss-Seidel based ADMM (sGS-ADMM). However, they neither studied the performance
 of an alternating minimization method with such a subroutine for computing barycenter
 nor provided the global convergence analysis for the outer alternating minimization method.
 Recently, Ye et al. \cite{YeWWL17} proposed a 3-block B-ADMM for computing a Wasserstein barycenter directly.
 Although this method has demonstrated a computational efficiency for large-scale data,
 it is still unclear whether it is globally convergent or not. In fact, for convex programs,
 it has been shown in \cite{Chen16} that the direct extension of the classical ADMM to
 the three-block case can be divergent.

 The main contribution of this work is to develop a globally convergent and efficient
 inexact proximal alternating minimization (iPAM) method for computing
 an approximate Wasserstein barycenter when the support points are unknown.
 Since a proximal alternating minimization strategy is used,
 each iteration involves two strongly convex quadratic programs (QPs).
 One of them has a closed form solution, and the other has a polyhedral constraint set
 and may be good-conditioned by controlling the proximal parameter elaborately.
 The strongly convex QPs have much better stability than those LPs appearing
 in \cite{YeLi14,Wang14}, which means that their solutions are much easier to achieve.
 In Section \ref{sec5.1}, we propose a tailored linearized ADMM for solving
 the strongly convex QP by exploiting the special structure of the feasible set.
 Different from the sGS-ADMM proposed in \cite{Yang18}, the linearized
 ADMM is a primal solver for a 2-block strongly convex QP instead of a dual solver
 for the 3-block LP. We notice that the B-ADMM in \cite{Wang14} also belongs to
 this line since at each iteration it transforms the LP into two simple strongly convex
 Kullback-Leibler minimization problems to solve. Compared with the B-ADMM,
 our linearized ADMM not only has a global convergence \cite{FPST13}
 but also admits the well-established linear rate of convergence \cite{HanSZ17} and
 weighted iteration complexity \cite{ShenPan16}. Numerical comparisons with
 the 3-block B-ADMM in \cite{YeWWL17} and an alternating minimization method
 involving the LP subproblems (ALMLP for short) on synthetic and real data indicate that
 the proposed iPAM yields comparable even a little better objective values
 within less computing time, and hence the computed approximate barycenter will
 render a better role in the D2-clustering.

 For a nonconvex and nonsmooth optimization problem, without additional conditions
 imposed on the problem, the convergence result of an alternating minimization method
 and more general block coordinate descent methods
 (see, e.g., \cite{Tseng01,Tseng09,Razaviyayn13,Hong16}) is typically limited to
 the objective value convergence (to a possibly non-minimal value) or the convergence
 of a certain subsequence of iterates to a critical point. Motivated by the recent
 excellent works \cite{Attouch10,Attouch13,Bolte14}, we achieve the global convergence
 of our iPAM method for computing a Wasserstein barycenter by using
 the Kurdyka-Lojasiewicz (KL) property of the extended objective function. It is worthwhile
 to point out that under the KL assumption Xu and Yin \cite{Xu17} also developed a globally
 convergent algorithm based on block coordinate update for a class of nonconvex and nonsmooth
 optimization problems.

 The rest of this paper is organized as follows. Section \ref{sec2} presents
 the notation and preliminary knowledge that will be used in this paper.
 In Section 3, we introduce the optimization model for computing a Wasserstein barycenter
 when its support points are unknown and propose an inexact PAM method
 for solving it. Section 4 focuses on the convergence analysis of the iPAM method.
 In Section 5, we provide the implementation details of the iPAM and compare its performance
 with that of the B-ADMM \cite{YeWWL17} and ALMLP on synthetic and real data.

 \section{Notation and preliminaries}\label{sec2}

 Throughout this paper, $\mathbb{R}^{m\times n}$ represents the vector space consisting of
 all $m\times n$ real matrices, equipped with the trace inner product $\langle \cdot,\cdot\rangle$
 and its induced Frobenius norm $\|\cdot\|_F$, i.e., $\langle X,Y\rangle={\rm tr}(X^{\mathbb{T}}Y)$
 for $X,Y\in\mathbb{R}^{m\times n}$, and $\mathbb{R}_{+}^{m\times n}$ denotes
 the polyhedral cone consisting of all $m\times n$ nonnegative real matrices.
 For a given vector $x\in\mathbb{R}^n$, $\|x\|$ means the Euclidean-norm
 in $\mathbb{R}^n$. For a given set $S$, $\delta_{S}$ denotes the indicator
 function on $S$, i.e., $\delta_{S}(u)=0$ if $u\in S$, otherwise $\delta_{S}(u)=+\infty$;
 when $S$ is convex, $\mathcal{N}_S(x)$ denotes the normal cone of $S$ at $x$
 in the sense of convex analysis \cite{Roc70}. The notation $e_p$
 denotes a column vector of all ones whose dimension is $p$.

 In the rest of this section, the notation $\mathbb{X}$ denotes a finite dimensional
 vector space equipped with the inner product $\langle \cdot,\cdot\rangle$ and
 its induced norm $\|\cdot\|$. First of all, we recall from \cite[Chapter 8]{RW98}
 the notion of generalized subdifferentials for an extended real-valued function.
 \subsection{Generalized subdifferential and critical point}\label{sec2.1}
 \begin{definition}\label{gsubdiff}
  Consider a function $h\!:\mathbb{X}\to(-\infty,+\infty]$
  and a point $\overline{x}$ with $h(\overline{x})$ finite.
  The regular subdifferential of $h$ at $x$, denoted by
  $\widehat{\partial}h(x)$, is defined as
  \[
    \widehat{\partial}h(x):=\bigg\{v\in\mathbb{R}^p\ \big|\
    \liminf_{x'\to x\atop x'\ne x}
    \frac{h(x')-h(x)-\langle v,x'-x\rangle}{\|x'-x\|}\ge 0\bigg\};
  \]
  and the (limiting) subdifferential of $h$ at $x$, denoted by $\partial h(x)$, is defined as
  \[
    \partial h(x):=\Big\{v\in\mathbb{R}^p\ |\  \exists\,x^k\xrightarrow[h]{}x\ {\rm and}\
    v^k\in\widehat{\partial}h(x^k)\to v\ {\rm as}\ k\to\infty\Big\}.
  \]
 \end{definition}
  \begin{remark}\label{remark-Fsubdiff}
   {\bf(a)} Notice that $\widehat{\partial}h(\overline{x})\subseteq
   \partial h(\overline{x})$, and the former is a closed convex set,
   but the latter is generally not convex.
   When $h$ is convex, $\partial h(\overline{x})=\widehat{\partial}h(\overline{x})$,
   which also coincides with the subdifferential of $h$ at $\overline{x}$
   in the sense of convex analysis \cite{Roc70}.

   \noindent
   {\bf(b)} Let $\{(x^k,\xi^k)\}$ be a sequence in the graph of the set-valued mapping
   $\partial h\!:\mathbb{X}\rightrightarrows\mathbb{X}$, which converges to
   $(\overline{x},\overline{\xi})$. If $h(x^k)\to h(\overline{x})$ as $k\to\infty$,
   then $(\overline{x},\overline{\xi})\in{\rm gph}\partial h$.

   \noindent
   {\bf(c)} By \cite[Theorem 10.1]{RW98}, a necessary condition
   for $\overline{x}\in\mathbb{X}$ to be a local minimizer of $h$ is
   $0\in\widehat{\partial}h(\overline{x})\subseteq\partial h(\overline{x})$.
   A point $x^*$ satisfying $0\in\partial h(x^*)$ (respectively, $0\in\widehat{\partial} h(x^*)$)
   is called a critical (respectively, regular critical) point of $h$. The critical point set
   of $h$ is denoted by ${\rm crit}\,h$.
  \end{remark}

  For an optimization problem with a nonconvex objective function but
  a closed convex feasible set, it is common to consider its directional
  stationary point, which is defined as follows.
  \begin{definition}\label{Dir-point}
   Let $h\!:\mathbb{X}\to\mathbb{R}$ be a directionally differentiable function,
   and $S\subseteq\mathbb{X}$ be a closed convex set. Consider a point
   $\overline{x}\in S$. If for every $x\in S$,
   \(
     h'(\overline{x};x-\overline{x})\ge 0,
   \)
   then $\overline{x}$ is called a directional stationary point of
   the minimization problem $\min_{x\in S}h(x)$.
 \end{definition}

 The following lemma states that for the problem in Definition \ref{Dir-point},
 when $h$ is locally Lipschitz relative to $S$ and regular, its directional
 stationary points are same as the critical point of $h+\delta_S$.
 \begin{lemma}\label{relation}
  Consider the minimization problem $\min_{x\in S}h(x)$ where
  $S\subseteq\mathbb{X}$ is a closed convex set.
  Suppose that $h\!:\mathbb{X}\to\mathbb{R}$ is directionally differentiable and locally Lipschitz
  at $\widehat{x}\in S$. If $\widehat{x}$ is a directional stationary point
  of this problem, then $0\in\partial h(\widehat{x})+\mathcal{N}_S(\widehat{x})$.
  If in addition $\partial h(\widehat{x})=\widehat{\partial}h(\widehat{x})$,
  then every critical point of $h+\delta_{S}$ is a directional stationary point
  of this problem.
 \end{lemma}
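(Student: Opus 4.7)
The plan is to dispatch the two implications separately, starting with the (easier) converse.

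For the converse, assume the regularity $\widehat{\partial}h(\widehat{x})=\partial h(\widehat{x})$ and pick $v\in\widehat{\partial}h(\widehat{x})$ with $-v\in\mathcal{N}_S(\widehat{x})$. By the definition of the Fr\'{e}chet subdifferential, combined with the hypothesis that $h$ is directionally differentiable at $\widehat{x}$, one has $h'(\widehat{x};d)\ge\langle v,d\rangle$ for every direction $d$. For any $x\in S$, the normal-cone inclusion yields $\langle v,x-\widehat{x}\rangle\ge 0$, and therefore $h'(\widehat{x};x-\widehat{x})\ge 0$, i.e., $\widehat{x}$ is a directional stationary point.

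For the forward direction, I would first extend the inequality $h'(\widehat{x};x-\widehat{x})\ge 0$ from $x\in S$ to every direction $d$ in the tangent cone of $S$ at $\widehat{x}$. This uses the positive homogeneity of $h'(\widehat{x};\cdot)$ together with its Lipschitz continuity in $d$, which is inherited from the local Lipschitz property of $h$ and justifies the passage to the closure of the radial cone. Next, using the standard fact that for locally Lipschitz $h$ the Clarke generalized directional derivative dominates the ordinary one and equals the support function of the Clarke subdifferential $\partial^{\circ}h(\widehat{x})$ (which in finite dimensions is the convex hull of $\partial h(\widehat{x})$, hence compact and convex), I obtain that the support function of $\partial^{\circ}h(\widehat{x})$ is nonnegative on the whole tangent cone. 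Applying Sion's minimax theorem to the compact convex sets formed by the tangent cone intersected with the closed unit ball and by $\partial^{\circ}h(\widehat{x})$, one can exchange $\min$ and $\max$ and produce a single element $v^{\star}\in\partial^{\circ}h(\widehat{x})$ with $\langle v^{\star},d\rangle\ge 0$ on the tangent cone, i.e., $-v^{\star}\in\mathcal{N}_S(\widehat{x})$.

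The main obstacle is upgrading this Clarke-style conclusion to the desired limiting-subdifferential form, since $\partial^{\circ}h(\widehat{x})$ is in general only the convex hull of $\partial h(\widehat{x})$ and the two Minkowski sums with $\mathcal{N}_S(\widehat{x})$ need not coincide. The cleanest route I see is to work directly with the extended objective $h+\delta_S$: argue first that $0\in\partial(h+\delta_S)(\widehat{x})$ by combining directional stationarity with a Lipschitz-based perturbation that allows one to pick Fr\'{e}chet subgradients along an approximating sequence, and then apply the sum rule available for locally Lipschitz functions, $\partial(h+\delta_S)(\widehat{x})\subseteq\partial h(\widehat{x})+\partial\delta_S(\widehat{x})=\partial h(\widehat{x})+\mathcal{N}_S(\widehat{x})$. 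The delicate step is the construction of that approximating sequence: without the regularity $\widehat{\partial}h(\widehat{x})=\partial h(\widehat{x})$ one cannot exhibit a suitable Fr\'{e}chet subgradient at $\widehat{x}$ directly, so one must produce nearby points whose Fr\'{e}chet subgradients converge to a vector that survives the outer closure built into the definition of $\partial h$.
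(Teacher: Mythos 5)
Your converse argument is correct and coincides with the paper's. The forward direction, however, is not yet a proof. Your first route (extend $h'(\widehat{x};\cdot)\ge 0$ to the tangent cone, pass to the Clarke derivative, apply Sion's theorem) is fine as far as it goes, but, as you yourself point out, it only yields $0\in \mathrm{con}\,\partial h(\widehat{x})+\mathcal{N}_S(\widehat{x})$, which is weaker than the assertion; and the repair you then sketch through $\phi:=h+\delta_S$ stops exactly at the step that matters, namely showing $0\in\partial\phi(\widehat{x})$. Moreover, the difficulty you flag there --- that one must manufacture nearby points whose Fr\'echet subgradients survive the outer limit --- is a misdiagnosis: no approximating sequence and no regularity of $h$ are needed, because $0$ is already a \emph{regular} (Fr\'echet) subgradient of $\phi$ at $\widehat{x}$.

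Here is the missing step, which closes your second route. For any $w$, consider the subderivative $d\phi(\widehat{x})(w)$. If $w\notin{\rm cl}\,{\rm cone}(S-\widehat{x})$, then there are no sequences $t_k\downarrow 0$, $w^k\to w$ with $\widehat{x}+t_kw^k\in S$, so $d\phi(\widehat{x})(w)=+\infty$. If $w$ lies in this tangent cone, then along any such sequence the local Lipschitz property gives $[h(\widehat{x}+t_kw^k)-h(\widehat{x})]/t_k\ \ge\ [h(\widehat{x}+t_kw)-h(\widehat{x})]/t_k-L\|w^k-w\|$, whence $d\phi(\widehat{x})(w)\ge h'(\widehat{x};w)$; and $h'(\widehat{x};\cdot)$, being positively homogeneous, Lipschitz, and nonnegative on $S-\widehat{x}$ by directional stationarity, is nonnegative on the whole tangent cone. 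Thus $d\phi(\widehat{x})\ge 0$ everywhere, which by \cite[Exercise 8.4]{RW98} means $0\in\widehat{\partial}\phi(\widehat{x})\subseteq\partial\phi(\widehat{x})$; the sum rule \cite[Corollary 10.9]{RW98}, whose qualification condition holds automatically since $h$ is locally Lipschitz, then gives $\partial\phi(\widehat{x})\subseteq\partial h(\widehat{x})+\mathcal{N}_S(\widehat{x})$, i.e., the claim. For comparison, the paper proves the forward direction along your first route, via $\widehat{d}h(\widehat{x})=\sup_{z\in\partial h(\widehat{x})}\langle z,\cdot\rangle$, and then extracts a single $v\in\partial h(\widehat{x})$ ``by compactness''; the convex-hull issue you raise is precisely what that extraction glosses over, so your instinct to switch to $h+\delta_S$ is sound --- it just needs to be finished as above rather than left as an open ``delicate step''.
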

 \begin{proof}
  Write $f(x):=h(x)+\delta_{S}(x)$ for $x\in\mathbb{X}$.
 From the definition of subderivative and the directional differentiability of $h$,
 it is not hard to obtain that
 \begin{equation*}
   df(\widehat{x})(w)=h'(\widehat{x};w)+\delta_{\mathcal{T}_{S}(\widehat{x})}(w)
   \quad\forall w\in\mathbb{X}.
 \end{equation*}
 Since $h'(\widehat{x};x-\widehat{x})\ge 0$ for all $x\in S$,
 from the definition of radial cone, it follows that
 $h'(\widehat{x};w)\ge 0$ for all $w\in\mathcal{R}_{S}(\widehat{x})$.
 Notice that $\mathcal{T}_{S}(\widehat{x})={\rm cl}[\mathcal{R}_{S}(\widehat{x})]$.
 From the globally Lipschitz continuity of $h'(\widehat{x};\cdot)$,
 we have $h'(\widehat{x};w)\ge 0$ for all $w\in\mathcal{T}_{S}(\widehat{x})$.
 Together with the last equation, it follows that
 \[
   df(\widehat{x})(w)\ge 0\quad{\rm for\ all}\ w\in\mathbb{X}.
 \]
 Now pick any $(u,\alpha)\in\mathcal{T}_{{\rm epi}f}(\widehat{x},f(\widehat{x}))
 ={\rm epi}\,df(\widehat{x})$ where the equality is due to \cite[Theorem 8.2]{RW98}.
 Then,
 \(
   \langle (0,-1),(u,\alpha)\rangle=-\alpha\le-df(\widehat{x})(u)\le 0,
 \)
 which implies that
 \[
  (0,-1)\in[\mathcal{T}_{{\rm epi}f}(\widehat{x},f(\widehat{x}))]^{\circ}
 =\widehat{\mathcal{N}}_{{\rm epi}f}(\widehat{x},f(\widehat{x}))
 \subseteq \mathcal{N}_{{\rm epi}f}(\widehat{x},f(\widehat{x})).
 \]
 By \cite[Theorem 8.9]{RW98}, it follows that
 $0\in\partial f(\widehat{x})\subseteq \partial h(\widehat{x})+\mathcal{N}_{S}(\widehat{x})$.

 Now suppose that $\partial h(\widehat{x})=\widehat{\partial}h(\widehat{x})$.
 From $0\in \partial h(\widehat{x})+\mathcal{N}_{S}(\widehat{x})$,
 we have $0\in\widehat{\partial}h(\widehat{x})+\mathcal{N}_{S}(\widehat{x})$.
 Then, there exists $s\in\mathcal{N}_{S}(\widehat{x})$ such that
 $-s\in\widehat{\partial}h(\widehat{x})$. By \cite[Exercise 8.4]{RW98},
 for any $x\in S$,
 \[
   dh(\widehat{x})(x-\widehat{x})\ge\langle -s,x-\widehat{x}\rangle\ge 0.
 \]
 Notice that $dh(\widehat{x})(x-\widehat{x})=h'(\widehat{x};x-\widehat{x})$.
 So, $h'(\widehat{x};x-\widehat{x})\ge 0$ for all $x\in S$. \qed
 \end{proof}
 \subsection{Kurdyka-Lojasiewicz property}\label{subsec2.2}
 \begin{definition}\label{KL-def}
  Let $h\!:\mathbb{X}\to(-\infty,+\infty]$ be a proper lower semicontinuous (lsc) function.
  The function $h$ is said to have the Kurdyka-Lojasiewicz (KL) property
  at $\overline{x}\in{\rm dom}\,\partial h$ if there exist $\eta\in(0,+\infty]$,
  a continuous concave function $\varphi\!:[0,\eta)\to\mathbb{R}_{+}$ satisfying
  \begin{itemize}
  \item [(i)] $\varphi(0)=0$ and $\varphi$ is continuously differentiable on $(0,\eta)$,

  \item[(ii)] for all $s\in(0,\eta)$, $\varphi'(s)>0$;
  \end{itemize}
  and a neighborhood $\mathcal{U}$ of $\overline{x}$ such that for all
  \(
    x\in\mathcal{U}\cap\big[h(\overline{x})<h(x)<h(\overline{x})+\eta\big],
  \)
  \[
    \varphi'(h(x)-h(\overline{x})){\rm dist}(0,\partial h(x))\ge 1.
  \]
  If $h$ satisfies the KL property at each point of ${\rm dom}\,\partial h$,
  then $h$ is called a KL function.
 \end{definition}
 \begin{remark}\label{KL-remark}
  By Definition \ref{KL-def} and \cite[Lemma 2.1]{Attouch10},
  a proper lsc function has the KL property at every noncritical point.
  Thus, to show that a proper lsc $h\!:\mathbb{X}\to(-\infty,+\infty]$
  is a KL function, it suffices to check that $h$ has the KL property at
  any critical point.
 \end{remark}

 As discussed in \cite[Section 4]{Attouch10}, many classes of functions are the KL function;
 for example, the semialgebraic function. A function $h\!:\mathbb{R}^n\to(-\infty,+\infty]$
 is semialgebraic if its graph is a semialgebraic subset of $\mathbb{R}^{n+1}$.
 Recall that a subset of $\mathbb{R}^{n}$ is called semialgebraic if
 it can be written as a finite union of sets of the form
 \[
  \Omega=\bigcup_{i=1}^p\bigcap_{j=1}^q\Big\{x\in\mathbb{R}^n:f_{ij}(x)=0,g_{ij}(x)>0\Big\}
\]
 where $f_{ij}\!:\mathbb{R}^n\rightarrow\mathbb{R}$ and $g_{ij}\!:\mathbb{R}^n\rightarrow\mathbb{R}$
 are polynomial functions for all $1\leq i\le p,1\leq j\le q$.

 \section{Inexact PAM method for computing W-barycenter}\label{sec3}

  In this section, we shall develop an inexact PAM method for computing
  Wasserstein barycenter, which marks the main difference between D2-clustering and K-means.
  For this purpose, we first introduce the Wasserstein barycenter involved in D2-clustering.

 \subsection{Wasserstein barycenter in D2-clustering}\label{sec3.1}

 Consider discrete probability distributions with finite supports specified by
 a set of support points and their associated probabilities
 \(
   \big\{(x_1,w_1),\ldots,(x_m,w_m) \big\},
 \)
 where $x_i\in\mathbb{R}^d$ for $i=1,2,\ldots,m$ are the support vectors and $w=(w_1,\ldots,w_m)^{\mathbb{T}}\in\Delta\!:=\!\{z\in\mathbb{R}_{+}^m\,|\,\sum_{i=1}^m z_i=1\}$
 is the probability vector.
 Let $P^{\pi}\!=\!\big\{(x_i^{\pi},w_i^{\pi}),\,i=1,\ldots,m_\pi\big\}$ and
 $P^{\nu}\!=\!\big\{(x_j^{\nu},w_j^{\nu}),\,j=1,\ldots,m_\nu\big\}$ be the given
 discrete probability distributions. The $\ell_2$-Wasserstein distance between
 $P^{\pi}$ and $P^{\nu}$, denoted by $W(P^{\pi},P^{\nu})$,
 is the square root of the optimal value of the following linear programming problem
 \begin{align}\label{Wasserstein-distance}
   &W^2(P^{\pi},P^{\nu})=\min_{Z_{ij}\geq0}\sum_{i=1}^{m_\pi} \sum_{j=1}^{m_\nu}Z_{ij}\big\|x_i^{\pi} - x_j^{\nu}\big\|^2\nonumber\\
   &\qquad\qquad\qquad\quad {\rm s.t.}\ \ \sum_{j=1}^{m_\nu} Z_{ij}=w_i^{\pi},\quad i=1,2,\ldots,m_\pi;\\
   &\qquad\qquad\qquad\qquad\quad\sum_{i=1}^{m_\pi} Z_{ij}=w_j^{\nu},\quad j=1,2,\ldots,m_\nu,\nonumber
 \end{align}
 and an optimal solution of \eqref{Wasserstein-distance}
 is called  the optimal matching weights between support points $x_i^{\pi}$
 and $x_j^{\nu}$ (or the optimal coupling for $P^{\pi}$ and $P^{\nu}$).

 Given the number of clusters $K$ and a set of discrete distributions $\big\{P^{t},\,t=1,2,\ldots,N\big\}$
 where $P^{t}=\big\{(a_j^{t},b_j^{t})\in\mathbb{R}^d\times\mathbb{R},\,j=1,\ldots,n_t\big\}$,
 the goal of D2-clustering is to seek a set of centroid distributions
 $Q^*=\{Q^{s,*},\,s=1,2,\ldots,K\}$ such that
 \begin{align}\label{D2-clusteringprob}
 Q^*\in\mathop{\arg\min}_{Q^{1},\ldots,Q^{K}}\sum_{t=1}^{N}\min_{s\in\{1,\ldots,K\}}W^2(Q^{s},P^{t})
 \end{align}
 where $Q^{s}=\big\{(x_i^{s},w_i^{s})\in\mathbb{R}^d\times\mathbb{R},\,i=1,\ldots,m\big\}$
 for $s=1,\ldots,K$. Similar to $K$-means, D2-clustering achieves a desirable set of
 centroid distributions by alternately doing the two tasks:
 assigning each instance to the nearest centroid and computing the centroids.
 By Algorithm \ref{D2-clustering} in Appendix, the major computation challenge
 in each step of D2-clustering is to compute an optimal centroid distribution,
 called Wasserstein barycenter, for each cluster. Different from $K$-means,
 the optimal centroid distribution in \eqref{centroidpart} does not have a closed form.
 In fact, it is intractable since problem \eqref{centroidpart} is a nonconvex program
 in which the number of decision variables $m(1+d)+m\sum_{t=1}^N n_t$
 quickly becomes very large even for a rather small number of distributions each of
 which contains $10$ support points.
 \subsection{Inexact PAM method for computing centroid distribution}\label{sec3.2}

  Suppose that a set of discrete probability distributions $\{P^t\!: t=1,2,\ldots,N\}$ is given,
  where $P^{t}=\big\{(a_j^{t},b_j^{t})\in\mathbb{R}^d\times\mathbb{R},\,j=1,\ldots,n_t\big\}$,
  and $N$ is the sample size for computing a Wasserstein barycenter.
  Problem \eqref{centroidpart} in Algorithm \ref{D2-clustering} is to find an optimal $Q^*=\{(x_1^*,w_1^*),\ldots,(x_m^*,w_m^*)\}$ among all discrete probability distributions $Q=\{(x_1,w_1),\ldots,(x_m,w_m)\}$ such that
  \begin{equation}\label{centroid-prob}
    Q^*\in\mathop{\arg\min}_{Q}\frac{1}{N}\sum_{t=1}^N W^2(Q,P^{t}).
  \end{equation}
  Write $b^t:=(b_1^t,\ldots,b_{n_t}^t)^{\mathbb{T}}\in\mathbb{R}^{n_t}$.
  The minimization problem in \eqref{centroid-prob} actually takes the form of
  \begin{align}\label{centroid0}
   &\min_{{\mathcal{Z}}\in\mathbb{R}^{m\times n},w\in\mathbb{R}^m,x\in\mathbb{R}^{md}}
   \langle {\mathcal{Z}},N^{-1}F(x)\big\rangle\nonumber\\
   &\qquad\quad\ \ {\rm s.t.}\ \ Z^{t}e_{n_t} - w = 0,\quad t=1,2,\ldots,N,\\
   &\qquad\qquad\quad\ \ (Z^t)^{\mathbb{T}}e_m-b^t=0,\quad t=1,2,\ldots,N,\nonumber\\
   &\qquad\qquad\qquad {\mathcal{Z}}\in\mathbb{R}_{+}^{m\times n},w\in\Delta,\nonumber
 \end{align}
 where $\mathcal{Z}\!=\![Z^1\ \cdots\ Z^{N}]\in\mathbb{R}^{m\times n}$
 and $F(x):=[F^1(x)\ \cdots\ F^{N}(x)]\in\mathbb{R}^{m\times n}$
 with $n=\sum_{t=1}^N n_t$ and
 \[
    [F^t(x)]_{ij}:=\|x_i-a_j^t\|^2\quad{\rm for}\  x=(x_1;\cdots;x_m)\in\mathbb{R}^{md}.
 \]
 The LP solvers developed in \cite{Wang14,YeLi14,Yang18} are precisely
 solving the problem in \eqref{centroid0} with a fixed $x$.

 For each $t\in\{1,2,\ldots,N\}$, let
 \(
   \Sigma_t:=\big\{Y^t\in\mathbb{R}_{+}^{m\times n_t}\ |\ (Y^t)^{\mathbb{T}}e_m=b^t\big\}.
 \)
 By using the indicator functions of the sets $\Sigma_t$ and $\Delta$,
 problem \eqref{centroid0} can be compactly written as
  \begin{align}\label{centroid}
   &\min_{{\mathcal{Z}}\in\mathbb{R}^{m\times n},w\in\mathbb{R}^m,x\in\mathbb{R}^{md}}
   \sum_{t=1}^N\Big[N^{-1}\big\langle Z^t,F^t(x)\big\rangle +\delta_{\Sigma_t}(Z^t)\Big]+\delta_{\Delta}(w)\nonumber\\
   &\qquad\qquad {\rm s.t.}\ \ Z^{t}e_{n_t} -w = 0,\quad t=1,2,\ldots,N.
 \end{align}
 Although the objective function of problem \eqref{centroid} is nonconvex,
 it has a desirable coupled structure, that is, when one of the variables
 $x$ and ${\mathcal{Z}}$ is fixed, it becomes a solvable convex program.
 Inspired by this, we solve problem \eqref{centroid} in an alternating way.
 The iterate steps are described as below.
 \vspace{-0.3cm}
 \begin{algorithm}[H]
 \caption{\label{BCD}{({\bf iPAM method for solving \eqref{centroid}})}}
 \textbf{Initialization:} Choose $\alpha_0>\underline{\alpha}>0,\rho_0>\underline{\rho}>0$
              and an starting point $({\mathcal{Z}^0},w^0,x^0)$. Set $k:=0$.\\
 \textbf{while} the stopping conditions are not satisfied \textbf{do}
 \begin{itemize}
  \item[1.]  Compute
              \begin{align}\label{subprob1-Zw}
               ({\mathcal{Z}}^{k+1},w^{k+1})\approx &\mathop{\arg\min}_{Z,w}\bigg\{\sum_{t=1}^N\Big[N^{-1}\langle Z^t, F^t(x^k)\rangle+\delta_{\Sigma_t}(Z^t)\Big]+\delta_{\Delta}(w)\nonumber\\
               &\qquad\qquad\quad +\frac{\alpha_k}{2}\Big[\|{\mathcal{Z}}-{\mathcal{Z}^{k}}\|_F^2+\|w-w^k\|^2\Big]\bigg\}\\
               &\ {\rm s.t.}\ \ Z^{t}e_{n_t} - w = 0,\quad t=1,2,\ldots,N.\nonumber
              \end{align}

  \item[2.]   Compute
               \begin{equation}\label{subprob1-x}
                x^{k+1}=\mathop{\arg\min}_{x\in\mathbb{R}^{md}}\bigg\{ \frac{1}{N}\sum_{t=1}^N\bigg[\sum_{i=1}^m\sum_{j=1}^{n_t}Z_{ij}^{t,k+1}\|x_i-a_j^{t}\|^2\bigg]+\frac{\rho_k}{2}\|x-x^k\|^2\bigg\}.
               \end{equation}

  \item[3.]  Choose $\alpha_{k+1}\in[\underline{\alpha},\alpha_k]$
              and $\rho_{k+1}\in[\underline{\rho},\rho_k]$. Let $k\leftarrow k+1$, and go to Step 1.
 \end{itemize}
 \textbf{end\ while}
 \end{algorithm}
 \begin{remark}\label{remark-BCD}
  {\bf(a)} Since the term $N^{-1}\sum_{t=1}^N\big\langle Z^t,F^t(x)\big\rangle$
  in the objective function of \eqref{centroid} does not have a globally Lipschitz
  continuous gradient, we use a proximal strategy instead of a majorization technique
  as in \cite{Xu17} for each block subproblem. The proximal term $\frac{\alpha_k}{2}(\|\cdot-{\mathcal{Z}^{k}}\|_F^2+\|\cdot-w^k\|^2)$ ensures
  that a strongly convex QP instead of an LP subproblem is solved at each iteration.
  Consider that each subproblem in \eqref{subprob1-Zw} is only a convex relaxation
  to the original nonconvex problem \eqref{centroid}, and its solution with high accuracy
  may not be the best. In view of this, we seek an inexact optimal solution
  $(\mathcal{Z}^{k+1},w^{k+1})$ of each subproblem \eqref{subprob1-Zw}
  in the following sense: there exists an error matrix $\Xi^k\in\mathbb{R}^{m\times n}$
  and an error vector $\xi^k\in\mathbb{R}^m$ such that
 \begin{align}\label{inexactPAMx}
  &(\mathcal{Z}^{k+1},w^{k+1})=\mathop{\arg\min}_{Z,w}\bigg\{\sum_{t=1}^N\Big[N^{-1}\langle Z^t, F^t(x^k)\rangle+\delta_{\Sigma_t}(Z^t)\Big]-\langle(\Xi^k,\xi^k),(\mathcal{Z},w)\rangle\nonumber\\
               &\qquad\qquad\qquad\qquad\qquad\qquad +\delta_{\Delta}(w)
               +\frac{\alpha_k}{2}\Big[\|{\mathcal{Z}}-{\mathcal{Z}^{k}}\|_F^2+\|w-w^k\|^2\Big]\bigg\}\nonumber\\
               &\qquad\qquad\qquad\qquad {\rm s.t.}\ \ Z^{t}e_{n_t} - w = 0,\quad t=1,2,\ldots,N
  \end{align}
  and
  \begin{equation}\label{error-vec}
   \|\Xi^{k}\|\le\frac{\gamma_k}{2}\|\mathcal{Z}^{k+1}-\mathcal{Z}^{k}\|,\,
   \|\xi^{k}\|\le\frac{\gamma_k}{2}\|w^{k+1}-w^{k}\|\ {\rm for\ some}\ \gamma_k\in[0,\alpha_k/2].
  \end{equation}
  In Section \ref{sec5.1}, we develop a linearized ADMM for seeking such $(\mathcal{Z}^{k+1},w^{k+1})$.
  By Remark \ref{remark-sPADMM} (c) there, we know that the cost of computing
  $(\mathcal{Z}^{k+1},w^{k+1})$ is $O\big(\kappa(\sum_{t=1}^Nn_t+1)m\log{m}\big)$,
  where $\kappa\in\mathbb{N}$ is the number of iteration of the linearized ADMM for
  seeking $(\mathcal{Z}^{k+1},w^{k+1})$.

  \noindent
  {\bf(b)} Algorithm \ref{BCD} is well defined since each subproblem has a unique optimal solution.
  In particular, from the optimality condition of problem \eqref{subprob1-x},
  it is not hard to obtain
  \begin{equation}\label{xksol}
    x_i^{k+1}=\frac{2\sum_{t=1}^N\sum_{j=1}^{n_t}Z_{ij}^{t,k+1}a_j^t+\rho_kNx_i^k}
    {2\sum_{t=1}^N\sum_{j=1}^{n_t}Z_{ij}^{t,k+1}+\rho_kN }
    \ \ {\rm for}\ \ i=1,2,\ldots,m.
  \end{equation}
  The cost of computing $x^{k+1}$ is $O(md\sum_{t=1}^Nn_t)$.
  Since $m$ and $d$ are small in many cases, the main computation cost of
  Algorithm \ref{BCD} in each step is to seek an inexact solution of \eqref{subprob1-Zw}.

 \end{remark}

 To close this section, we characterize the set of the stationary points of problem \eqref{centroid}.
 Define
 \begin{equation}\label{Psi}
  \Psi({\mathcal{Z}},w,x):=f({\mathcal{Z}},w,x)+g({\mathcal{Z}},w,x)\quad\ \forall ({\mathcal{Z}},w,x)\in\mathbb{R}^{m\times n}\times\mathbb{R}^m\times\mathbb{R}^{md}
 \end{equation}
 where $f\!:\mathbb{R}^{m\times n}\times\mathbb{R}^m\times\mathbb{R}^{md}\to\mathbb{R}$
 and $g\!:\mathbb{R}^{m\times n}\times\mathbb{R}^m\times\mathbb{R}^{md}\to(-\infty,+\infty]$
 are defined by
 \begin{equation}\label{fun-fg}
  f({\mathcal{Z}},w,x)\!:=\!\frac{1}{N}\sum_{t=1}^N\big\langle Z^t,F^t(x)\big\rangle
  \ \ {\rm and}\ \
  g({\mathcal{Z}},w,x)\!:=\!\sum_{t=1}^N\big[\delta_{\Sigma_t}(Z^t)+\delta_{\Gamma_{t}}(\mathcal{Z},w)\big]+\delta_{\Delta}(w).
 \end{equation}
 Here, $\Gamma_{\!t}:=\big\{(\mathcal{Z},w)\in\mathbb{R}^{m\times n}\times\mathbb{R}^m\,|\, Z^te_{n_t}-w=0\big\}$ for each $t\in\{1,2,\ldots,N\}$.
 The following lemma provides a characterization for the set of
 the critical points of $\Psi$, which by the continuous differentiability
 of $f$ and Lemma \ref{relation} is exactly the set of directional stationary points
 of \eqref{centroid}.
 \begin{lemma}\label{CritPsi}
  The point $({\mathcal{\widehat{Z}}},\widehat{w},\widehat{x})\in{\rm crit}\Psi$
  if and only if it satisfies the following conditions
  \begin{subnumcases}{}
  \left(\begin{matrix}
   0\\ 0
   \end{matrix}\right)
   \in
   \left(\begin{matrix}
   N^{-1}F(\widehat{x})\\0
   \end{matrix}\right)
   +\left(\begin{matrix}
     \mathcal{N}_{\Sigma_1\times\cdots\times\Sigma_N}(\widehat{\mathcal{Z}})\\
      \mathcal{N}_{\Delta}(\widehat{w})
      \end{matrix}\right)+\sum_{t=1}^N\mathcal{N}_{\Gamma_{\!t}}(\widehat{\mathcal{Z}},\widehat{w}),\\
   0=2N^{-1}\sum_{t=1}^N\sum_{j=1}^{n_t}\widehat{Z}^t_{ij}(\widehat{x}_i-a_j^t).
  \end{subnumcases}
 \end{lemma}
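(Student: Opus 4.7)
The plan is to decompose $\partial\Psi=\partial(f+g)$ block by block. Since $f$ is a polynomial in $(\mathcal{Z},w,x)$ and hence continuously differentiable on the whole space, the standard sum rule (see, e.g., \cite[Exercise 8.8]{RW98}) gives $\partial\Psi(\mathcal{Z},w,x)=\nabla f(\mathcal{Z},w,x)+\partial g(\mathcal{Z},w,x)$. A direct calculation yields $\nabla_{\mathcal{Z}}f=N^{-1}F(x)$, $\nabla_w f=0$, and, for each $i\in\{1,\ldots,m\}$,
\[
 \nabla_{x_i}f(\mathcal{Z},w,x)=2N^{-1}\sum_{t=1}^N\sum_{j=1}^{n_t}Z^t_{ij}(x_i-a_j^t).
\]
This computation already produces the right-hand side of equation (b) as the $x$-component of $0\in\partial\Psi$.

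Next, I would analyze $\partial g$. Because $g(\mathcal{Z},w,x)=\widetilde g(\mathcal{Z},w)$ is independent of $x$, its $x$-component is $\{0\}^{md}$. Rewriting $\sum_{t=1}^N\delta_{\Sigma_t}(Z^t)=\delta_{\Sigma_1\times\cdots\times\Sigma_N}(\mathcal{Z})$, the remaining $(\mathcal{Z},w)$-part of $\widetilde g$ is a finite sum of indicator functions of polyhedral convex sets, hence convex; by Remark \ref{remark-Fsubdiff}(i) its limiting subdifferential coincides with the convex subdifferential. Because all summands are indicators of polyhedral sets, the convex sum rule applies without any Slater-type qualification (cf. \cite[Corollary 23.8.1]{Roc70}), producing
\[
 \partial\widetilde g(\mathcal{Z},w)=\mathcal{N}_{\Sigma_1\times\cdots\times\Sigma_N}(\mathcal{Z})\times\mathcal{N}_{\Delta}(w)+\sum_{t=1}^N\mathcal{N}_{\Gamma_t}(\mathcal{Z},w).
\]
This is exactly the $(\mathcal{Z},w)$-component appearing in condition (a).

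Combining the two pieces, $0\in\partial\Psi(\widehat{\mathcal{Z}},\widehat{w},\widehat{x})$ splits into a block in the variable $x$ and a block in the variables $(\mathcal{Z},w)$; the first is $\nabla_x f(\widehat{\mathcal{Z}},\widehat{w},\widehat{x})=0$, which is (b), and the second is the normal-cone inclusion (a). Because every implication used above is reversible, the converse direction requires no extra work.

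The one delicate point I anticipate is the validity of the convex sum rule for $\widetilde g$ without a constraint qualification; this is resolved by the fact that $\Sigma_1\times\cdots\times\Sigma_N$, $\Delta$, and each $\Gamma_t$ are all polyhedral, for which the sum rule of normal cones is unconditional. Everything else amounts to routine partial-gradient computation and the smooth-plus-nonsmooth subdifferential splitting, so no further obstacle is expected.
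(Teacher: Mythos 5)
Your proposal is correct and follows essentially the same route as the paper: split $\partial\Psi(\mathcal{\widehat{Z}},\widehat{w},\widehat{x})=\nabla f(\mathcal{\widehat{Z}},\widehat{w},\widehat{x})+\partial g(\mathcal{\widehat{Z}},\widehat{w},\widehat{x})$ via the smooth-plus-nonsmooth sum rule \cite[Exercise 8.8]{RW98}, then evaluate $\partial g$ as a sum of normal cones using the polyhedral convex sum rule from \cite{Roc70}, and read off the two blocks. The only minor difference is that the paper explicitly exhibits a feasible point of \eqref{centroid0} to verify the nonemptiness hypothesis needed for \cite[Theorem 23.8]{Roc70}, whereas you invoke the unconditional polyhedral refinement; both justifications are adequate.
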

 \begin{proof}
  Recall that $({\mathcal{\widehat{Z}}},\widehat{w},\widehat{x})\in{\rm crit}\Psi$
  if and only if $0\in\partial\Psi({\mathcal{\widehat{Z}}},\widehat{w},\widehat{x})$.
  By using \cite[Exercise 8.8]{RW98} and the smoothness of $f$,
  from \eqref{Psi} we have
  \(
    \partial\Psi({\mathcal{\widehat{Z}}},\widehat{w},\widehat{x})
    =\nabla f({\mathcal{\widehat{Z}}},\widehat{w},\widehat{x})
     +\partial g({\mathcal{\widehat{Z}}},\widehat{w},\widehat{x}).
  \)
  Notice that problem \eqref{centroid0} has a nonempty feasible set; for example,
  with $w^0=\frac{1}{m}e$ and $Z^{t,0}=\frac{1}{m}[b^t;\cdots;b^t]\in\mathbb{R}^{m\times n_t}$
  for each $t$, $({\mathcal{Z}^0},w^0,x)$ for any $x\in\mathbb{R}^{md}$ is feasible.
  Together with the polyhedrality of the sets $\Sigma_t,\Gamma_t$ and $\Delta$,
  from \cite[Theorem 23.8]{Roc70} it follows that
  \[
    \partial g({\mathcal{\widehat{Z}}},\widehat{w},\widehat{x})
    =\left(\begin{matrix}
     \mathcal{N}_{\Sigma_1\times\cdots\times\Sigma_N}(\widehat{\mathcal{Z}})\\
      \mathcal{N}_{\Delta}(\widehat{w})\\
      \{0\}^{md}
      \end{matrix}\right)+\left(\begin{matrix}
      \sum_{t=1}^N\mathcal{N}_{\Gamma_{\!t}}(\widehat{\mathcal{Z}},\widehat{w})\\
                                \{0\}^{md}
       \end{matrix}\right).
  \]
  Together with the expression of $f$ in \eqref{fun-fg},
  we obtain the desired result. \qed
 \end{proof}
 \section{Convergence analysis of Algorithm \ref{BCD}}\label{sec4}

  For the proximal alternating minimization methods, the global convergence and
  the linear convergence rate of the whole sequence have been developed in \cite{Attouch10,Attouch13,Xu13}
  under some conditions. In this section, for the inexactness $(\mathcal{Z}^{k+1},w^{k+1})$
  in the sense of \eqref{inexactPAMx}-\eqref{error-vec}, we check that the conditions
  in \cite[Section 6]{Attouch13} required by the global convergence are satisfied
  by the sequence $\{({\mathcal{Z}^k},w^k,x^k)\}$ generated by
  Algorithm \ref{BCD}, and then establish that the whole sequence converges to
  a critical point of $\Psi$. First, we study the properties of the sequence
  $\{({\mathcal{Z}^k},w^k,x^k)\}$ given by Algorithm \ref{BCD}.
 \begin{lemma}\label{decrease}
  Let $\{({\mathcal{Z}^k},w^k,x^k)\}_{k\in\mathbb{N}}$ be generated by
  Algorithm \ref{BCD} in the sense of \eqref{inexactPAMx}-\eqref{error-vec}. Then,
 \begin{itemize}
  \item[(i)] the sequence $\{\Psi({\mathcal{Z}^k},w^k,x^k)\}_{k\in\mathbb{N}}$ is nonincreasing,
               and moreover, for each $k\in\mathbb{N}$,
               \begin{align*}
                &\Psi({\mathcal{Z}^{k}},w^{k},x^{k})-\Psi({\mathcal{Z}^{k-1}},w^{k-1},x^{k-1})\\
                &\le -\frac{\alpha_{k-1}-\gamma_{k-1}}{2}\Big[\|{\mathcal{Z}^{k}}-{\mathcal{Z}^{k-1}}\|_F^2
                     +\|w^{k}-w^{k-1}\|^2\Big]-\frac{\rho_{k-1}}{2}\|x^{k}-x^{k-1}\|^2;
               \end{align*}

  \item[(ii)] $\sum_{k=1}^{\infty}\big[\|{\mathcal{Z}^{k}}-{\mathcal{Z}^{k-1}}\|_F^2+\|w^{k}-w^{k-1}\|^2+\|x^{k}-x^{k-1}\|^2\big]<\infty$,
               and consequently
               \[
                 \lim_{k\to\infty}\|{\mathcal{Z}^{k}}-{\mathcal{Z}^{k-1}}\|_F=0,\
                 \lim_{k\to\infty}\|w^{k}-w^{k-1}\|=0,\,
                 \lim_{k\to\infty}\|x^{k}-x^{k-1}\|=0;
               \]
  \item[(iii)] the sequence $\{({\mathcal{Z}^k},w^k)\}_{k\in\mathbb{N}}$ is bounded. If, in addition,
               the following level set
               \[
                 \mathcal{L}_{0}:=\big\{({\mathcal{Z}},w,x)\in\mathbb{R}^{m\times n}\times\mathbb{R}^m\times\mathbb{R}^{md}\ |\ f({\mathcal{Z}},w,x)\le f({\mathcal{Z}^0},w^0,x^0)\big\}
               \]
               is bounded where $f$ is defined by \eqref{fun-fg}, then the sequence $\{x^k\}$ is also bounded.
 \end{itemize}
 \end{lemma}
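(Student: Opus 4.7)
The plan is to establish (i) by combining two one-step decrease inequalities, one for the inexact $(\mathcal{Z},w)$-update coming from (S.1) and one for the exact $x$-update in (S.2); then to derive (ii) by telescoping the decrease; and finally to obtain (iii) from the explicit description of the feasible set together with the level-set assumption.

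For (i), I would first exploit that $(\mathcal{Z}^{k+1},w^{k+1})$ is the exact minimizer of the perturbed strongly convex problem \eqref{inexactPAMx}. Testing that problem with the feasible point $(\mathcal{Z}^k,w^k)$ gives
\[
f(\mathcal{Z}^{k+1},w^{k+1},x^k)+g(\mathcal{Z}^{k+1},w^{k+1},x^k)+\tfrac{\alpha_k}{2}\big[\|\mathcal{Z}^{k+1}-\mathcal{Z}^k\|_F^2+\|w^{k+1}-w^k\|^2\big]\le f(\mathcal{Z}^k,w^k,x^k)+g(\mathcal{Z}^k,w^k,x^k)+\langle\Xi^k,\mathcal{Z}^{k+1}-\mathcal{Z}^k\rangle+\langle\xi^k,w^{k+1}-w^k\rangle.
\]
Cauchy--Schwarz together with Assumption \ref{assump1} then absorbs the two cross terms, leaving a clean drop of $\tfrac{\alpha_k-\gamma}{2}[\|\mathcal{Z}^{k+1}-\mathcal{Z}^k\|_F^2+\|w^{k+1}-w^k\|^2]$ in $f+g$. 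For the $x$-update, the same trick applied to \eqref{subprob1-x} (which is strongly convex with modulus $\rho_k$) evaluated at $x^k$ yields $f(\mathcal{Z}^{k+1},w^{k+1},x^{k+1})\le f(\mathcal{Z}^{k+1},w^{k+1},x^k)-\tfrac{\rho_k}{2}\|x^{k+1}-x^k\|^2$. Since $g$ is independent of $x$, adding the two bounds and relabeling $k\mapsto k{-}1$ produces the stated inequality in (i).

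For (ii), I would sum the telescope $\Psi(\mathcal{Z}^{k-1},w^{k-1},x^{k-1})-\Psi(\mathcal{Z}^k,w^k,x^k)$ from $k=1$ to $K$. The left-hand side is bounded by $\Psi(\mathcal{Z}^0,w^0,x^0)-\inf\Psi$, and $\Psi$ is bounded below by $0$ because $f$ is a nonnegative combination of squared distances while $g$ only takes the values $0$ and $+\infty$. Combined with $\alpha_{k-1}-\gamma\ge\underline{\alpha}-\gamma>0$ and $\rho_{k-1}\ge\underline{\rho}>0$, this gives summability of $\|\mathcal{Z}^k-\mathcal{Z}^{k-1}\|_F^2+\|w^k-w^{k-1}\|^2+\|x^k-x^{k-1}\|^2$, from which the three limits follow.

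For (iii), feasibility of every iterate forces $w^k\in\Delta$ and, for each $t$, $Z^{t,k}\ge 0$ with $(Z^{t,k})^{\mathbb{T}}e=b^t$, so the entries of $\mathcal{Z}^k$ are uniformly bounded by a constant depending only on the data $\{b^t\}$. For $\{x^k\}$, part (i) gives $f(\mathcal{Z}^k,w^k,x^k)=\Psi(\mathcal{Z}^k,w^k,x^k)\le\Psi(\mathcal{Z}^0,w^0,x^0)=f(\mathcal{Z}^0,w^0,x^0)$, putting every iterate inside $\mathcal{L}_{0}$ and hence forcing boundedness of $\{x^k\}$ whenever $\mathcal{L}_{0}$ is. The main technical point I expect to be delicate is the clean absorption of the inexactness terms $\langle\Xi^k,\cdot\rangle,\langle\xi^k,\cdot\rangle$ in step (i); once the sharp form of the one-step decrease is secured, the remaining arguments are routine.
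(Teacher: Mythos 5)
Your proposal is correct and follows essentially the same route as the paper's proof: the same comparison of objective values in the perturbed subproblem \eqref{inexactPAMx} at $(\mathcal{Z}^{k+1},w^{k+1})$ versus the feasible previous iterate, the same Cauchy--Schwarz absorption of the error terms via Assumption \ref{assump1} giving the $\frac{\alpha_k-\gamma}{2}$ drop, the same exact comparison for the $x$-step, and then telescoping with nonnegativity of $\Psi$ for (ii) and the feasibility/level-set observations for (iii). No gaps beyond those shared with the paper (e.g., the implicit feasibility of the starting point), so nothing further to add.
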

\begin{proof}
 {\bf(i)} By the definition of $({\mathcal{Z}^{k}},w^{k})$ and the feasibility of $({\mathcal{Z}^{k-1}},w^{k-1})$ to \eqref{inexactPAMx},
 \begin{align*}
   &\Psi({\mathcal{Z}^{k}},w^{k},x^{k-1})+\frac{\alpha_{k-1}}{2}\Big[\big\|{\mathcal{Z}^{k}}-{\mathcal{Z}^{k-1}}\big\|_F^2 +\|w^{k}-w^{k-1}\|^2\Big]\\
   &\leq \Psi({\mathcal{Z}^{k-1}},w^{k-1},x^{k-1})+\langle (\Xi^{k-1},\xi^{k-1}),(\mathcal{Z}^k-\mathcal{Z}^{k-1},w^k-w^{k-1})\rangle.
 \end{align*}
 Together with the inequalities in \eqref{error-vec}, it follows that
 \[
  \Psi({\mathcal{Z}^{k}},w^{k},x^{k-1})+\frac{\alpha_{k-1}-\gamma_{k-1}}{2}\Big[\big\|{\mathcal{Z}^{k}}-{\mathcal{Z}^{k-1}}\big\|_F^2 +\|w^{k}-w^{k-1}\|^2\Big]\leq \Psi({\mathcal{Z}^{k-1}},w^{k-1},x^{k-1}).
 \]
  In addition, from the definition of $x^{k}$, it immediately follows that
 \begin{equation}\label{temp-Zwxk}
    \Psi({\mathcal{Z}^{k}},w^{k},x^{k}) \leq \Psi({\mathcal{Z}^{k}},w^{k},x^{k-1})-\frac{\rho_{k-1}}{2}\|x^{k}-x^{k-1}\|^2.
 \end{equation}
 From the last two inequalities, we obtain the desired result.

 \noindent
 {\bf (ii)} From part (i) and the definition of the function $\Psi$, for each $k\in\mathbb{N}$ it holds that
 \begin{align*}
  &\frac{\alpha_{k-1}-\gamma_{k-1}}{2}\Big[\|{\mathcal{Z}^{k}}-{\mathcal{Z}^{k-1}}\|_F^2 + \|w^{k}-w^{k-1}\|^2\Big]+\frac{\rho_{k-1}}{2}\|x^{k}-x^{k-1}\|^2\\
  &\le \frac{1}{N}\sum_{t=1}^N\langle Z^{t,k-1},F^t(x^{k-1})\rangle-\frac{1}{N}\sum_{t=1}^N\langle Z^{t,k},F^t(x^{k})\rangle.
 \end{align*}
 This inequality particularly implies that for any $k'\ge 1$
 \begin{align*}
  &\sum_{k=1}^{k'}\!\Big[\frac{\alpha_{k-1}-\gamma_{k-1}}{2}\big(\|{\mathcal{Z}^{k}}-{\mathcal{Z}^{k-1}}\|_F^2
  +\|w^{k}-w^{k-1}\|^2\big)+\frac{\rho_{k-1}}{2}\|x^{k}-x^{k-1}\|^2\Big] \\
  &\le\frac{1}{N}\sum_{t=1}^N\langle Z^{t,0},F^t(x^{0})\rangle - \frac{1}{N}\sum_{t=1}^N\langle Z^{t,k'},F^t(x^{k'})\rangle
  \le\frac{1}{N}\sum_{t=1}^N\langle Z^{t,0},F^t(x^{0})\rangle.
 \end{align*}
 By taking the limit $k'\to\infty$, the desired result follows from the last inequality.

 \medskip
 \noindent
  {\bf(iii)} From the iteration steps of Algorithm \ref{BCD}, we have
 $\{Z^{t,k}\}\subseteq\Sigma_t$ for each $t=1,\ldots,N$ and $\{w^k\}\subseteq \Delta$.
 This shows that $\{({\mathcal{Z}^k},w^k)\}_{k\in\mathbb{N}}$ is bounded.
 From part (i) it follows that $\{({\mathcal{Z}^k},x^k)\}_{k\in\mathbb{N}}\subseteq\mathcal{L}_{0}$.
 Since the set $\mathcal{L}_{0}$ is bounded, $\{x^k\}$ is bounded.
 \end{proof}

 To give a subgradient lower bound for the iterate gap,
 let $U^k\!:=({\mathcal{Z}^k},w^k,x^k)$ for each $k\in\mathbb{N}$.
\begin{lemma}\label{lower-bound}
 Let $\{U^k\}_{k\in\mathbb{N}}$ be the sequence yielded by Algorithm \ref{BCD}
 in the sense of \eqref{inexactPAMx}-\eqref{error-vec}. Let
 \begin{subnumcases}{}\label{Ax}
  A^k_{\mathcal{Z}}:=\alpha_{k-1}({\mathcal{Z}^{k-1}}-{\mathcal{Z}^{k}})+N^{-1}(F(x^{k})-F(x^{k-1}))+\Xi^{k-1},\\
  A_w^{k}:=\alpha_{k-1}(w^{k-1}-w^{k})+\xi^{k-1},\\
  A_x^{k}:=\rho_{k-1}(x^{k-1}-x^{k}).
 \end{subnumcases}
 Then, for each $k\in\mathbb{N}$,
 \(
   (A^{k}_{\mathcal{Z}},A_w^k,A_x^k)\in\partial\Psi({\mathcal{Z}^k},w^k,x^k).
 \)
 If the level set $\mathcal{L}_{0}$ is bounded, then there exists an $M>0$ such that
 with $\widehat{a}=\max_{1\le t\le N,1\le j\le n_t}\|a_j^t\|$,
 \begin{equation}
   \big\|(A^k_{\mathcal{Z}},A_w^k,A_x^k)\big\|
   \le\sqrt{\max\Big(4.5\alpha_0^2,{\frac{8n}{N^2}(2M+\widehat{a})^2+\rho_0^2}\Big)}
   \big\|U^{k}-U^{k-1}\big\|.
 \end{equation}
 \end{lemma}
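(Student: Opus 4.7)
My plan is to split the lemma into two independent parts: establishing the subdifferential inclusion, and then bounding its norm. For the inclusion, since $g(\mathcal{Z},w,x)=\widetilde{g}(\mathcal{Z},w)$ is independent of $x$ and $f$ is continuously differentiable, the sum rule \cite[Exercise 8.8]{RW98} gives
\[
 \partial\Psi(\mathcal{Z}^k,w^k,x^k)=\nabla f(\mathcal{Z}^k,w^k,x^k)+\partial\widetilde{g}(\mathcal{Z}^k,w^k)\times\{0\}^{md},
\]
so it suffices to produce elements of $\partial\widetilde{g}(\mathcal{Z}^k,w^k)$ and $\nabla f(\mathcal{Z}^k,w^k,x^k)$ that match the claimed formulas. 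From the inexact optimality \eqref{ocinexactPAMZ} read at iteration $k-1$ together with \eqref{inexactPAMx}, I would rearrange to obtain
\[
 \Xi^{k-1}-N^{-1}F(x^{k-1})-\alpha_{k-1}(\mathcal{Z}^k-\mathcal{Z}^{k-1})\in\partial_{\mathcal{Z}}\widetilde{g}(\mathcal{Z}^k,w^k),
\]
and the analogous inclusion for the $w$-block. Adding $\nabla_{\mathcal{Z}}f(\mathcal{Z}^k,w^k,x^k)=N^{-1}F(x^k)$ reproduces $A^k_{\mathcal{Z}}$, and the $w$-block is immediate since $\nabla_w f=0$. For the $x$-block, the closed-form optimality \eqref{xksol} of \eqref{subprob1-x} gives $\nabla_x f(\mathcal{Z}^k,w^k,x^k)+\rho_{k-1}(x^k-x^{k-1})=0$, i.e.\ $\nabla_x f(\mathcal{Z}^k,w^k,x^k)=A_x^k$.

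For the norm bound, the only non-trivial ingredient is a Lipschitz estimate on $F$ over the relevant bounded region. Under boundedness of $\mathcal{L}_0$, Lemma \ref{decrease}(iii) yields a uniform $M>0$ with $\|x^k_i\|\le M$ for all $i,k$; applying the elementary identity $|\|u\|^2-\|v\|^2|\le(\|u\|+\|v\|)\|u-v\|$ entry-wise and summing over the $n=\sum_t n_t$ columns then yields an estimate of the form $\|F(x^k)-F(x^{k-1})\|_F\le C\sqrt{n}\,\|x^k-x^{k-1}\|$, with the constant stemming from the bound $\|x^k_i-a^t_j\|+\|x^{k-1}_i-a^t_j\|\le 2M+2\widehat{a}$ (producing the $(2M+\widehat{a})^2$-type scaling in the announced constant). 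With this in hand, I would apply the triangle inequality and $(a+b)^2\le 2a^2+2b^2$ to $\|A^k_{\mathcal{Z}}\|_F^2$ and $\|A_w^k\|^2$, substituting in Assumption \ref{assump1} to replace $\|\Xi^{k-1}\|$ and $\|\xi^{k-1}\|$ by iterate gaps; $\|A_x^k\|^2=\rho_{k-1}^2\|x^k-x^{k-1}\|^2$ is direct. Grouping the resulting coefficients into those that multiply $\|\mathcal{Z}^k-\mathcal{Z}^{k-1}\|_F^2+\|w^k-w^{k-1}\|^2$ (assembling into $4\alpha_{k-1}^2+\gamma^2$) and those that multiply $\|x^k-x^{k-1}\|^2$ (assembling into $\frac{8n}{N^2}(2M+\widehat{a})^2+\rho_{k-1}^2$), taking the max, and factoring out $\|U^k-U^{k-1}\|^2$, delivers the stated inequality.

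The main obstacle I anticipate is purely bookkeeping: correctly reading Assumption \ref{assump1} so that $\Xi^{k-1}$ and $\xi^{k-1}$ contribute to the intended coefficient group, and keeping the index shift tight so that the $F$-increment appears on the correct side, compensating precisely $N^{-1}F(x^{k-1})$ by $N^{-1}F(x^k)$ via the gradient of $f$. No genuinely new analytic idea beyond smoothness of $f$, polyhedrality of the constraints (already exploited in Lemma \ref{CritPsi}), and elementary quadratic inequalities should be required.
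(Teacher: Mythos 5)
Your proposal is correct and follows essentially the same route as the paper: the inclusion is obtained from the exact optimality conditions of the perturbed subproblem \eqref{inexactPAMx} and of \eqref{subprob1-x} combined with the sum rule for $\Psi=f+g$, and the bound from $\|u+v\|^2\le 2\|u\|^2+2\|v\|^2$, Assumption \ref{assump1}, and the Lipschitz-type estimate on the entries of $F$ over the bounded set guaranteed by $\mathcal{L}_0$, grouped exactly as you describe (your slightly sharper constant $2M+2\widehat{a}$ still implies the stated inequality). The only point to keep tidy in a full write-up is to retain the $(\mathcal{Z},w)$-inclusion in the \emph{joint} subdifferential $\partial\widetilde{g}(\mathcal{Z}^k,w^k)$, as delivered directly by the optimality condition of \eqref{inexactPAMx}, rather than splitting into block-wise partial subdifferentials, since $\widetilde{g}$ couples $\mathcal{Z}$ and $w$ through the sets $\Gamma_t$.
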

 \begin{proof}
  By the optimality conditions of problems \eqref{inexactPAMx} and \eqref{subprob1-x},
  it is easy to obtain
  \begin{equation*}
   \left(\begin{matrix}
    \alpha_{k-1}\!({\mathcal{Z}^{k-1}}\!-\!{\mathcal{Z}^{k}})\!-\!\frac{1}{N}\!\big(F(x^{k-1})\!-\!F(x^k)\big)\!+\!\Xi^{k-1}\!\\ \alpha_{k-1}(w^{k-1}-w^k)+\xi^{k-1}\\
    \rho_{k-1}(x^{k-1}-x^k)-\frac{2}{N}\sum_{t=1}^N\sum_{j=1}^{n_t}Z_{ij}^{t,k}(x_i^k-a_j^t)
    \end{matrix}\right)\in
    \left(\begin{matrix}
     \frac{1}{N}F(x^k)\\
              0\\
              0
     \end{matrix}\right)+ \partial g({\mathcal{Z}^k},w^k,x^k)
  \end{equation*}
  where the function $g$ is defined by \eqref{fun-fg}.
  Together with the expression of $\Psi$, we have
  \[
    (A^k_{\mathcal{Z}},A_w^k,A_x^k)\in\partial\Psi({\mathcal{Z}^k},w^k,x^k).
  \]
  From the expression of $A^k_{\mathcal{Z}}$ and the relation $\|u-v\|^2\le 2\|u\|^2+2\|v\|^2$, it follows that
  \begin{align}\label{AZk-ineq}
   \|A^k_{\mathcal{Z}}\|_F^2
   &\le 2\|\alpha_{k-1}({\mathcal{Z}^{k-1}}-{\mathcal{Z}^k})+\Xi^{k-1}\|_F^2+\frac{2}{N^2}\|F(x^{k-1})-F(x^k)\|_F^2,\nonumber\\
   &\le 4\alpha_{k-1}^2\|{\mathcal{Z}^{k-1}}-{\mathcal{Z}^k}\|_F^2+4\|\Xi^{k-1}\|_F^2+\frac{2}{N^2}\|F(x^{k-1})-F(x^k)\|_F^2,\nonumber\\
   &\le(4\alpha_{k-1}^2\!+\!\gamma_{k-1}^2)\|{\mathcal{Z}^{k-1}}\!-\!{\mathcal{Z}^k}\|_F^2
   +\frac{2}{N^2}\sum_{t=1}^N\sum_{i=1}^m\sum_{j=1}^{n_t}
   \big(\|x_i^{k-1}\!-\!a_j^t\|^2\!-\!\|x_i^{k}\!-\!a_j^t\|^2\big)^2
  \end{align}
  where the last inequality is from the first inequality in \eqref{error-vec}.
  Since $\{({\mathcal{Z}^k},w^k,x^k)\}\subseteq \mathcal{L}_{0}$ and
  the set $\mathcal{L}_{0}$ is bounded, there exists a constant
  $M>0$ such that $\|x^{k}\|\le M$ for all $k$.
  By the relation $\|u+v\|^2-\|u\|^2-\|v\|^2=2\langle u,v\rangle$,
  for each $i=1,\ldots,m$ and $j=1,\ldots,n_t$,
  \begin{align}\label{AZk-ineq1}
    \big|\|x_i^{k-1}-a_j^t\|^2-\|x_i^{k}-a_j^t\|^2\big|
    &=\big|\|x_i^k-x_i^{k-1}\|^2+2\langle x_i^k-x_i^{k-1},x_i^k-a_j^t\rangle\big|\nonumber\\
    &\le 2M\|x_i^k-x_i^{k-1}\|+2(M+\|a_j^t\|)\|x_i^k-x_i^{k-1}\|\nonumber\\
    &\le (4M+2\widehat{a})\|x_i^k-x_i^{k-1}\|.
  \end{align}
  Substituting \eqref{AZk-ineq1} into inequality \eqref{AZk-ineq} yields that
  \[
    \|A^k_{\mathcal{Z}}\|_F^2\le \max(4\alpha_{k-1}^2+\gamma_{k-1}^2,8nN^{-2}(2M+\widehat{a})^2)\big(\|{\mathcal{Z}^{k-1}}-{\mathcal{Z}^k}\|_F^2+\|x^k-x^{k-1}\|^2\big).
  \]
  Combining with the expressions of $A_w^k$ and $A_x^k$ and
  equation \eqref{error-vec} and noting that $\gamma_{k-1}\le0.5\alpha_{k-1}$,
  $\alpha_{k-1}\le\alpha_0$ and $\rho_{k-1}\le\rho_0$,
  we obtain the desired result follows. \qed
 \end{proof}

 Next we take a closer look at the KL property of the extended valued objective function $\Psi$.
 \begin{lemma}\label{PsiKL}
  The function $\Psi$ is semialgebraic, and consequently, it satisfies
  the KL property with $\phi(s)=cs^{1-\theta}$ for some $c>0$ and $\theta\in[0,1)\cap\mathbb{Q}$,
  where $\mathbb{Q}$ is the set of all rational numbers.
 \end{lemma}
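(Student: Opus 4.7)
The plan is to verify semialgebraicity of $\Psi$ by decomposing it into $f$ and $g$ and checking each piece, then to invoke the standard Łojasiewicz-type estimate available for semialgebraic functions at their critical points.

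First I would handle $f$. From its expression in \eqref{fun-fg}, we have
\[
   f({\mathcal{Z}},w,x)=\frac{1}{N}\sum_{t=1}^{N}\sum_{i=1}^{m}\sum_{j=1}^{n_t}Z_{ij}^{t}\,\|x_i-a_j^t\|^{2},
\]
which is a polynomial in the entries of $(\mathcal{Z},w,x)$ (of total degree three). Polynomials are semialgebraic, since their graphs are defined by a single polynomial equation.

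Next I would handle $g$. By definition, $g$ is the sum of the indicator functions $\delta_{\Sigma_t}$, $\delta_{\Gamma_t}$ $(t=1,\ldots,N)$ and $\delta_\Delta$. Each of the sets $\Sigma_t$, $\Gamma_t$ and $\Delta$ is polyhedral, because it is described by finitely many linear equalities and nonnegativity constraints; hence each is a semialgebraic subset of the corresponding Euclidean space. The indicator function of a semialgebraic set is a semialgebraic function (its graph is the union of the set $\times\{0\}$ and the complement $\times\,\mathbb{R}_{>0}\cup\,\mathbb{R}_{<0}$, both semialgebraic). Since the class of semialgebraic functions is closed under finite sums (by the Tarski--Seidenberg principle), $g$ is semialgebraic, and therefore so is $\Psi=f+g$.

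Having established that $\Psi$ is semialgebraic, the KL property with a desingularization function of the claimed form follows from the classical Łojasiewicz inequality for semialgebraic functions, in the version due to Bolte, Daniilidis, and Lewis and stated, e.g., in \cite[Section~4.3]{Attouch10}: every proper lower semicontinuous semialgebraic function satisfies the KL inequality at each point of $\mathrm{dom}\,\partial\Psi$ with desingularizing function $\varphi(s)=c\,s^{1-\theta}$ for some $c>0$ and some rational $\theta\in[0,1)$. Applying that result pointwise (see Remark \ref{KL-remark}, which reduces the task to checking the KL property at critical points) yields the lemma.

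The only non-routine step is the verification that the sum $f+g$ remains in the semialgebraic class, which I would justify by the closure of semialgebraic functions under finite sums; the rest is bookkeeping, so I expect no real obstacle.
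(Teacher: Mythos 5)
Your proposal is correct and follows essentially the same route as the paper: write $\Psi=f+g$, note that $f$ is a polynomial and $g$ is the indicator of a polyhedral (hence semialgebraic) set, use closure of semialgebraic functions under finite sums, and then invoke the Bolte--Daniilidis--Lewis result to get the KL property with $\varphi(s)=cs^{1-\theta}$, $\theta\in[0,1)\cap\mathbb{Q}$. The only blemish is the parenthetical description of the indicator's graph (for $\delta_S$ the graph over its domain is just $S\times\{0\}$; the function equals $+\infty$ off $S$, so the complement does not contribute points of the form $(x,t)$ with $t$ finite), but this does not affect the argument since the standard fact you invoke is correct.
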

 \begin{proof}
  Recall that $\Psi({\mathcal{Z}},w,x)=f({\mathcal{Z}},w,x)+g({\mathcal{Z}},w,x)$ for $({\mathcal{Z}},w,x)\in\mathbb{R}^{m\times n}\times\mathbb{R}^m\times\mathbb{R}^{md}$,
  where $f$ and $g$ are the functions defined by \eqref{fun-fg}.
  Since $g$ is an indicator on a polyhedral set which is clearly semialgebraic,
  $g$ is semialgebraic by \cite[Section 4.3]{Attouch10}. Notice that
 \[
   f({\mathcal{Z}},w,x)=\frac{1}{N}\sum_{t=1}^N\sum_{i=1}^m\sum_{j=1}^{n_t}Z_{ij}^t\|x_i-a_j^t\|^2
 \]
 is a polynomial function. So, $f$ is also semialgebraic.
 Since the sum of semialgebraic functions is semialgebraic,
 $\Psi$ is semialgebraic. The second part of the conclusions
 follows by \cite{Bolte06}. \qed
\end{proof}

 Using Lemma \ref{decrease}-\ref{PsiKL} and following the same
 arguments as those for \cite[Theorem 6.2]{Attouch13}, we can establish
 the following global convergence result of Algorithm \ref{BCD}.
  \begin{theorem}\label{main-result}
   Let $\{U^k\}_{k\in\mathbb{N}}$ be the sequence generated by Algorithm \ref{BCD}.
   Suppose that the level set $\mathcal{L}_{0}$ of $f$ is bounded. Then,
   the following assertions hold.
   \begin{itemize}
     \item [(i)] The sequence $\{U^k\}_{k\in\mathbb{N}}$ has a finite length,
                 i.e., $\sum_{k=1}^{\infty}\|U^{k+1}-U^k\|<\infty$.

     \item [(ii)] The sequence $\{U^k\}_{k\in\mathbb{N}}$ converges to a critical point
                 $\widehat{U}=(\widehat{\mathcal{Z}},\widehat{w},\widehat{x})$ of $\Psi$.
   \end{itemize}
  \end{theorem}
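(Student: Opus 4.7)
The plan is to follow the standard KL-based convergence framework of \cite{Bolte14}, exploiting the three ingredients already assembled: the sufficient decrease property (Lemma \ref{decrease}(i)), the relative error bound $\|(A_{\mathcal{Z}}^k,A_w^k,A_x^k)\|\le C\|U^k-U^{k-1}\|$ with $(A_{\mathcal{Z}}^k,A_w^k,A_x^k)\in\partial\Psi(U^k)$ (Lemma \ref{lower-bound}), the continuity-along-the-sequence property $\Psi(U^{k_q})\to\Psi(U^*)$ (Proposition \ref{w-convergence}), and the KL property of $\Psi$ (Lemma \ref{PsiKL}). Set $\Psi^*:=\lim_{k\to\infty}\Psi(U^k)$, which by Proposition \ref{w-convergence}(ii) is the common value of $\Psi$ on $\omega(U^0)$. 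First I would dispose of the trivial case: if $\Psi(U^{k_0})=\Psi^*$ for some $k_0$, then Lemma \ref{decrease}(i) together with Assumption \ref{assump1} ($\gamma<\underline{\alpha}$) forces $U^{k+1}=U^k$ for all $k\ge k_0$, so both assertions hold.

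In the nontrivial case $\Psi(U^k)>\Psi^*$ for all $k$, I would invoke the uniformized KL property on the compact set $\omega(U^0)$ (as in \cite[Lemma 6]{Bolte14}): there exist $\varepsilon,\eta>0$ and a concave desingularizing function $\varphi$ such that for all $U$ with ${\rm dist}(U,\omega(U^0))<\varepsilon$ and $\Psi^*<\Psi(U)<\Psi^*+\eta$,
\[
\varphi'(\Psi(U)-\Psi^*)\,{\rm dist}(0,\partial\Psi(U))\ge 1.
\]
Since $\Psi(U^k)\downarrow\Psi^*$ and ${\rm dist}(U^k,\omega(U^0))\to 0$ (Proposition \ref{w-convergence}(i)), for $k$ large enough both conditions hold. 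Combining the KL inequality with the subgradient bound from Lemma \ref{lower-bound} gives
\[
\varphi'(\Psi(U^k)-\Psi^*)\ge \frac{1}{C\|U^k-U^{k-1}\|}.
\]
Using the concavity of $\varphi$ and abbreviating $\Delta_k:=\varphi(\Psi(U^k)-\Psi^*)-\varphi(\Psi(U^{k+1})-\Psi^*)\ge 0$, together with the sufficient decrease
\[
\Psi(U^k)-\Psi(U^{k+1})\ge a\big(\|U^{k+1}-U^k\|_*^2\big),
\]
(for a suitable composite norm, with $a>0$ absorbing $\tfrac{\underline{\alpha}-\gamma}{2}$ and $\tfrac{\underline{\rho}}{2}$), I obtain $a\|U^{k+1}-U^k\|_*^2\le C\,\Delta_k\,\|U^k-U^{k-1}\|_*$. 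The AM--GM inequality $2\sqrt{uv}\le u+v$ then yields the key recursion
\[
2\|U^{k+1}-U^k\|_*\le \|U^k-U^{k-1}\|_*+\tfrac{C}{a}\Delta_k.
\]

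Finally I would sum this recursion from $k=k_1$ to $K$: the $\Delta_k$ terms telescope to at most $\tfrac{C}{a}\varphi(\Psi(U^{k_1})-\Psi^*)<\infty$, and rearranging gives $\sum_{k=k_1}^\infty \|U^{k+1}-U^k\|_*<\infty$, proving assertion (i). Finite length implies $\{U^k\}$ is Cauchy, hence convergent to some $U^*\in\omega(U^0)$; by Proposition \ref{w-convergence}(i), $U^*\in{\rm crit}\Psi$, giving (ii). The main obstacle is verifying that the sufficient decrease in Lemma \ref{decrease}(i) and the subgradient bound in Lemma \ref{lower-bound} can be coupled uniformly across the three blocks $(\mathcal{Z},w,x)$ with a single scalar constant so that the AM--GM step goes through; this is where the proximal parameters being bounded below by $\underline{\alpha},\underline{\rho}$ and the inexactness parameter satisfying $\gamma<\underline{\alpha}$ are crucial, since otherwise the descent constant $a$ could degenerate and the recursion would fail.
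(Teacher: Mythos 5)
Your proposal is correct and follows essentially the same route as the paper, which proves Theorem \ref{main-result} simply by invoking Lemmas \ref{decrease}--\ref{PsiKL} and ``the same arguments as those for \cite[Theorem 1]{Bolte14}'' --- i.e., exactly the sufficient-decrease plus relative-error plus uniformized-KL recursion you spell out. Your added details (the uniform constants from $\underline{\alpha}-\gamma>0$, $\underline{\rho}>0$ and the boundedness of $\alpha_{k},\rho_{k}$ from above, the AM--GM telescoping, and the Cauchy argument) are precisely what the cited proof supplies, so there is no gap.
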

 \section{Numerical experiments}\label{sec5}

  We shall apply iPAM (i.e., Algorithm \ref{BCD}) to computing a Wasserstein barycenter
  in D2-clustering with unknown sparse finite supports, and compare its performance
  with that of the three-block B-ADMM (BADMM for short) proposed in \cite{YeWWL17}
  on some synthetic and real data. Notice that one may apply the state-of-art
  solver of the LP to the subproblem \eqref{subprob1-Zw} without the proximal terms.
  So, we also compare the performance of iPAM with that of such
  an alternating minimization method (abbreviated as ALMLP), for which
  the very powerful commercial package Gurobi 9.0.3 \cite{Gurobi2020}
  (with an academic license) is used to solve the LP subproblems.
  Since Gurobi is using the interior point method to solve the LPs,
  the computation cost of its each step is $O((\sum_{t=1}^Nn_t+1+mN)^3)$.
  Before doing numerical tests, we take a closer look at the solution of subproblem \eqref{subprob1-Zw}.
 \subsection{Linearized ADMM for solving subproblem \eqref{subprob1-Zw}}\label{sec5.1}

  We develop a tailored linearized ADMM for solving subproblem \eqref{subprob1-Zw},
  which is an extension of the classical ADMM designed by Glowinski and Marroco \cite{GM75}
  and Gabay and Mercier \cite{GM76}. For a given $\beta>0$, the augmented Lagrangian function of \eqref{subprob1-Zw}
 takes the following form
 \begin{align*}
   L_{\beta}({\mathcal{Z}},w;\lambda)
   &:=\sum_{t=1}^N\Big[\frac{1}{N}\langle Z^t, F^{t}(x^k)\rangle+\delta_{\Sigma_t}(Z^t)
   +\frac{\alpha_k}{2}\|Z^t-Z^{t,k}\|_F^2\Big]+\delta_{\Delta}(w)\\
   &\quad  +\sum_{t=1}^N\Big(\langle \lambda^t,Z^te-w\rangle
   +\frac{\beta}{2}\|Z^{t}e-w\|^2\Big)+\frac{\alpha_k}{2}\|w-w^k\|^2.
 \end{align*}
 With the function $L_{\beta}$, the iteration steps of the linearized ADMM are described as follows.
\begin{algorithm}[H]
 \caption{\label{semi-PADMM}{\ \ \bf Linearized ADMM for subproblem \eqref{subprob1-Zw}}}
 \textbf{Initialize:} Choose $\beta>0$ and $\tau\in(0,\frac{1+\sqrt{5}}{2})$.
  For $t=1,\ldots,N$, let $\mathcal{S}^{t,k}\!:\mathbb{R}^{m\times n_t}\to\mathbb{R}^{m\times n_t}$
  be a self-adjoint positive semidefinite linear map such that
  \(
    \alpha_k\mathcal{I}+\beta\mathcal{A}^t+\mathcal{S}^{t,k}\succeq 0,
  \)
  where $\mathcal{A}^t(X)\!:=\!Xe_{n_t}e_{n_t}^{\mathbb{T}}$ for $X\in\!\mathbb{R}^{m\times n_t}$.
  Choose an initial $(w^0,\lambda^0)\in\mathbb{R}^{m}\times\mathbb{R}^{mN}$. Set $\nu=0$.
 \begin{description}
 \item[Step 1.] Compute the following optimization problems
                \begin{subnumcases}{}\label{BA-Zpart}
                   {\mathcal{Z}^{\nu+1}}=\mathop{\arg\min}_{{\mathcal{Z}}\in\mathbb{R}^{m\times n}}
                  \bigg\{L_{\beta}({\mathcal{Z}},w^\nu;\lambda^\nu)+\frac{1}{2}\sum_{t=1}^N\|Z^t-Z^{t,\nu}\|_{\mathcal{S}^{t,k}}^2\bigg\},\\
                  \label{BA-ypart}
                  w^{\nu+1}=\mathop{\arg\min}_{w\in\mathbb{R}^m}L_{\beta}({\mathcal{Z}^{\nu+1}},w;\lambda^\nu).
                 \end{subnumcases}

  \item[Step 2.] Update the Lagrange multiplier by the formula
                 \begin{equation}\label{BA-lambdapart}
                   \lambda^{t,\nu+1}:=\lambda^{t,\nu}+\tau\beta(Z^{t,\nu+1}e_{n_t}-w^{\nu+1}),\quad t=1,2,\ldots,N.
                 \end{equation}

  \item[Step 3.] Set $\nu\leftarrow \nu+1$, and then go to Step 1.
 \end{description}
 \end{algorithm}
  \begin{remark}\label{remark-sPADMM}
  {\bf(a)} An immediate choice of $\mathcal{S}^{t,k}$ is
  \(
    \mathcal{S}^{t,k}\!:=(\sigma_{\!t}-\alpha_k)\mathcal{I}-\beta\mathcal{A}^t
  \)
  for a certain $\sigma_{\!t}\ge\alpha_k+\beta\|\mathcal{A}^t\|$.
  By the definition of $\mathcal{A}^t$,
  its spectral norm satisfies $\|\mathcal{A}^t\|\le\!\|e_{n_t}e_{n_t}^{\mathbb{T}}\|\le n_t$.
  So, $\sigma_t=\alpha_k+\beta n_t$ satisfies the requirement.
  For the subsequent numerical tests, we choose such positive semidefinite $\mathcal{S}^{t,k}$.
  For the global convergence and the linear rate of convergence of Algorithm \ref{semi-PADMM},
  the reader may refer to \cite{FPST13,HanSZ17}; and for its ergodic iteration complexity,
  the reader may refer to \cite{ShenPan16}.

  \medskip
  \noindent
  {\bf(b)} By the definition of $L_{\beta}$ and the choice of $\mathcal{S}^{t,k}$ in part (i),
  for each $t=1,\ldots,N$, it holds that
  \[
    Z^{t,\nu+1}=\mathop{\arg\min}_{Z^t\in\Sigma_t}\frac{\sigma_{\!t}}{2}\|Z^t-\sigma_t^{-1}H^t\|_F^2
  \]
  with $H^t:=\big[(\sigma_t\!-\!\alpha_k)\mathcal{I}-\beta\mathcal{A}^t\big](Z^{t,\nu})
  +\alpha_k(Z^{t,k}-(\alpha_kN)^{-1}F^t(x^k))+(\beta w^\nu-\lambda^{t,\nu})e_{n_t}^{\mathbb{T}}$, and
  \[
   w^{\nu+1}=\mathop{\arg\min}_{w\in\Delta}\frac{\beta N\!+\alpha_k}{2}\Big\|w-\frac{1}{\beta N\!+\alpha_k}
   \Big[\sum_{t=1}^N\big(\beta Z^{t,\nu+1}e+\lambda^{t,\nu}\big)+\alpha_k w^k\Big]\Big\|^2.
  \]
  The computation of ${Z^{t,\nu+1}}$ involves $n_t$ projections onto
  the simplex set $\Sigma_t$, while the computation of $\mathcal{Z}$ in
  each step involves $N$ times such projections which can be finished via
  the parallel technique. Thus,
  the computation cost of Step 1 in Algorithm \ref{semi-PADMM}
  is precisely $O\big((\sum_{t=1}^Nn_t\!+\!1)m\log{m}\big)$.
 \end{remark}

 After an elementary calculation, the dual of \eqref{subprob1-Zw} is
 the unconstrained smooth convex problem
 \begin{align}\label{dsubprob}
   &\max_{\lambda\in\mathbb{R}^{mN}}\frac{\alpha_k}{2}\bigg[\sum_{t=1}^N\big(\|\mathcal{G}_t(\lambda^t)-\Pi_{\Sigma_t}(\mathcal{G}_t(\lambda^t))\|_F^2
   -\|\mathcal{G}_t(\lambda^t)\|_F^2\big)\nonumber\\
   &\qquad\qquad\qquad +\big\|\mathcal{H}(\lambda)-\Pi_{\Delta}(\mathcal{H}(\lambda))\big\|^2
   -\|\mathcal{H}(\lambda)\|^2+M^k\bigg]
 \end{align}
 where $\lambda=(\lambda^1;\cdots;\lambda^N)\in\!\mathbb{R}^{mN},M^k=\!\sum_{t=1}^N\|Z^{t,k}\|_F^2+\|w^k\|^2$,
 $\mathcal{H}(\lambda):=w^k+\frac{1}{\alpha_k}\textstyle{\sum_{t=1}^N}\lambda^t$
 and $\mathcal{G}_t(u)\!:=Z^{t,k}\!-\!\frac{1}{\alpha_kN}F^{t}(x^k)
   -\!\frac{1}{\alpha_k}ue_{n_t}^{\mathbb{T}}$ for $t=1,\ldots,N$.
 So, during the testing, we update $\beta$ by the tradeoff
 between the primal infeasibility and relative KKT residual.
 For any $\mathcal{W}=(\mathcal{Z},w,\lambda)$, let
 \begin{align*}
  \eta_P(\mathcal{W}):=\frac{\sqrt{\sum_{t=1}^N\|Z^te_{n_t}-w\|^2}}{1+\|b\|},\ \eta_1(\mathcal{W}):=\frac{\|w-\Pi_{\Delta}(\sum_{t=1}^N\lambda^t+w-\alpha_k(w-\overline{w}))\|}{1+\|b\|},\\
  \eta_2(\mathcal{W}):=\frac{\sqrt{\sum_{t=1}^N\|Z^t-\Pi_{\Sigma_t}(Z^t-\frac{1}{N}W^t-\lambda^{t}{e_{n_t}}^{\mathbb{T}}
  -\alpha_k(Z^t-\overline{Z}^t))\|^2}}{1+\sqrt{\sum_{t=1}^N\sum_{j=1}^{n_t}\|a_j^t\|^2}}.\qquad\qquad
 \end{align*}
  It is easy to verify that $\eta(\mathcal{W})\!:=\max\{\eta_P(\mathcal{W}),\eta_1(\mathcal{W}),\eta_2(\mathcal{W})\}$
  equals $0$ if and only if $\mathcal{W}$ is a KKT point of problem \eqref{subprob1-Zw}.
  In addition, we terminate the linearized ADMM in terms of the relative KKT residual
  and the condition in \eqref{error-vec}. Specifically, for each step of Algorithm \ref{BCD},
  we terminate Algorithm \ref{semi-PADMM} whenever $\eta(\mathcal{W}^{\nu})<\epsilon_k$,
  or $\|\Xi^{\nu}\|_F<\frac{\alpha_k}{4}\|\mathcal{Z}^\nu\!-\!\mathcal{Z}^{k}\|_F$ and
  $\|\xi^{\nu}\|<\frac{\alpha_k}{4}\|w^\nu\!-\!w^{k}\|$ for $\nu\ge100$,
  where $\epsilon_k$ is updated by $\epsilon_{k+1}=\max(10^{-5},0.8\epsilon_k)$
  with $\epsilon_0=\min(\frac{12N}{\sum_{t=1}^N\!n_t},1)$.
  \vspace{-0.3cm}
 \subsection{BADMM for solving an equivalent problem of \eqref{centroid}}\label{sec5.2}

  By introducing $\mathcal{Y}=[Y^{1}\ Y^2\,\cdots\,Y^{N}]\!\in\mathbb{R}^{m\times n}$,
  problem \eqref{centroid} can be equivalently written as
  \begin{equation}\label{B-ADMM-prob}
   \min_{{\mathcal{Z}},\mathcal{Y},w,x}\left\{\sum_{t=1}^N \langle Z^{t},F^t(x)\rangle\ \ \mbox{s.t.}\
    (\mathcal{Y},w)\in\bigcap_{t=1}^N\widetilde{\Gamma}_t,\,Z^{t}=Y^{t},\,Z^{t}\in\Sigma_{t}\ {\rm for}\ t=1,\ldots,N\right\}
  \end{equation}
  where $\widetilde{\Gamma}_t=\Gamma_t\cap(\mathbb{R}_{+}^{m\times n}\times\Delta)$.
  The 3-block B-ADMM (BADMM) proposed in \cite{YeWWL17} replaces the quadratic augmented Lagrangian
  function by the Kullback-Leibler regularized Lagrange function:
  \[
   D_{\varrho}({\mathcal{Z}},\mathcal{Y},w,x;\Lambda)
   :=\sum_{t=1}^N \Big(\langle Z^{t},F^t(x)\rangle+\langle Z^{t}\!-\!Y^{t},
   \Lambda^t\rangle+\varrho D^t(Z^{t},Y^{t})\Big)
  \]
  where $\varrho>0$ is the regularization parameter, and
  $D^{t}\!:\mathbb{R}_{+}^{m\times n_t}\times\mathbb{R}_{++}^{m\times n_t}\to\mathbb{R}$
  is defined by
  \[
    D^t(Z^{t},Y^{t}):=\sum_{i=1}^m\sum_{j=1}^{n_t}Z_{ij}^{t}\Big(\log(Z_{ij}^{t}/Y_{ij}^{t})-1\Big)
    \ \ {\rm for}\ t=1,2,\ldots,N.
  \]
  Here, we stipulate $0\log 0=0$. The iteration steps of BADMM are described as follows.
 \begin{algorithm}[h]
 \caption{\label{B-ADMM}{\bf (BADMM for solving problem \eqref{B-ADMM-prob})}}
 \textbf{Initialize:} Choose $\varrho>0$ and a starting point $(Y^{t,0},w^0,x^0,\Lambda^0)$.
                      Set $k:=0$.
 \begin{description}
  \item[Step 1.] Compute the following optimization problems successively:
                \begin{subnumcases}{}\label{B-subprob-Z}
                   {\mathcal{Z}^{k+1}}=\mathop{\arg\min}_{\mathcal{Z}\in\Sigma_1\times\cdots\times\Sigma_N}D_{\varrho}(\mathcal{Z},\mathcal{Y}^{k},w^k,x^k;\Lambda^k),\\
                  \label{B-subprob-Y}
                  (\mathcal{Y}^{k+1},w^{k+1})=\mathop{\arg\min}_{(\mathcal{Y},w)\in\bigcap_{t=1}^N\widetilde{\Gamma}_t}                   \sum_{t=1}^N[\langle Z^{t,k+1}\!-\!Y^{t},\Lambda^t\rangle\!+\!\varrho D^t(Y^{t},Z^{t,k+1})],\\
                  \label{B-subprob-wx}
                  x^{k+1}\in\mathop{\arg\min}_{x\in\mathbb{R}^{md}}
                  D_{\varrho}({\mathcal{Z}^{k+1}},\mathcal{Y}^{k+1},w^{k+1},x;\Lambda^k).
                 \end{subnumcases}

  \item[Step 2.] Update the Lagrange multiplier by the formula
                 \begin{equation}\label{B-subprob-Lambda}
                   \Lambda^{k+1}:=\Lambda^k+\varrho({\mathcal{Z}^{k+1}}-\mathcal{Y}^{k+1}).
                 \end{equation}

  \item[Step 3.] Set $k\leftarrow k+1$, and then go to Step 1.
 \end{description}
 \end{algorithm}
 \begin{remark}\label{remark-BADMM}
  {\bf(a)} As discussed in \cite{YeWWL17}, subproblems \eqref{B-subprob-Z}-\eqref{B-subprob-Y}
  have a closed form solution. Among others, subproblem \eqref{B-subprob-Z} involves
  the minimization problems of the Kullback-Leibler functions over the simplex set
  $\Sigma_t$ for $t=1,\ldots,N$, and \eqref{B-subprob-Y} involves the minimization
  of the Kullback-Leibler functions on the simplex set $\widetilde{\Gamma}^t$
  for $t=1,\ldots,N$. This can be completed via the parallel technique.
  The computation cost of Step 1 in Algorithm \ref{B-ADMM}
  is $O(m\sum_{t=1}^Nn_t)+O(md\sum_{t=1}^Nn_t)$.

  \noindent
  {\bf(b)} Now it is unclear whether the BADMM is convergent or not, but
  as mentioned in the introduction, the direct extension of the classical
  ADMM to the 3-block case may be divergent.
 \end{remark}
 \subsection{Implementation details of three solvers}\label{sec5.2}

  We introduce the implementation details of iPAM, ALMLP and BADMM.
  During the testing, the mex files are written in C for the solution of
  problem \eqref{BA-Zpart}-\eqref{BA-ypart} and problem
  \eqref{B-subprob-Z}-\eqref{B-subprob-Y} so as to save the time when
  running the code in Matlab. In addition, the openmp parallel technique
  is used for the solution of problem \eqref{BA-Zpart} and problem
  \eqref{B-subprob-Z}-\eqref{B-subprob-Y}.

  For BADMM, we adopt the default setting for the parameters in the code.
  Since preliminary tests show that a varying $\rho_k$ does not
  improve the performance of Algorithm \ref{BCD}, we set $\rho_k\equiv 10^{-5}$.
  We update the parameter $\alpha_k$ by
  $\alpha_{k+1}=\max(\underline{\alpha},0.5\alpha_k)$ with
  $\alpha_0=10^2$ and $\underline{\alpha}=10^{-4}$ when
  $\frac{\alpha_k}{2}\big(\|{\mathcal{Z}^{k+1}}\!-\!{\mathcal{Z}^{k}}\|_F^2+\|w^{k+1}\!-\!w^k\|^2\big)
     >10^{-5}f({\mathcal{Z}^{k+1}},w^{k+1},x^{k+1})$, and otherwise keep unchanged.

 Notice that the KKT conditions for the nonconvex problem \eqref{B-ADMM-prob}
 takes the following form
 \begin{subnumcases}{}\label{KKTcenroid0}
   0\in N^{-1}F^t(x)+\Lambda^{t}+\mathcal{N}_{\Sigma_t}(Z^{t}),\ t=1,\ldots,N;\\
   \left(\begin{matrix}
   0\\ 0
   \end{matrix}\right)
   \in
   \left(\begin{matrix}
   -\Lambda\\ 0
   \end{matrix}\right)
   +\sum_{t=1}^N\mathcal{N}_{\widetilde{\Gamma}_{\!t}}(\mathcal{Y},w);\\
   0=\sum_{t=1}^N\sum_{j=1}^{n_t}Z_{ij}^{t}x_i-\sum_{t=1}^N\sum_{j=1}^{n_t}Z_{ij}^{t}a_j^t,\ i=1,2,\ldots,m;\\
   0=Z^{t}-Y^{t},\ t=1,\ldots,N.
 \end{subnumcases}
 We denote $\theta(\mathcal{U})\!:=\!\max\{\theta_P(\mathcal{U}),
  {\displaystyle\max_{i\in\{1,2,3\}}}\theta_i(\mathcal{U})\}$
  by its relative KKT residual at $\mathcal{U}\!=\!(\mathcal{Z},w,x,\mathcal{Y},\Lambda)$, where
 \begin{align*}
  &\theta_1(\mathcal{U})=\frac{\sqrt{\sum_{t=1}^N\|Z^t-\Pi_{\Sigma_t}(Z^t-F^t(x)-\Lambda^{t})\|_F^2}}
    {1+\sqrt{\sum_{t=1}^N\sum_{j=1}^{n_t}\|a_j^t\|^2}},\,\theta_2(\mathcal{U})=\frac{\|w-\Pi_{\Delta}(\sum_{t=1}^N\frac{1}{n_t}\Lambda^te_{n_t}+w)\|}{1+\|b\|}\\
  &\theta_3(\mathcal{U})=\frac{\sqrt{\sum_{i=1}^m\|\sum_{t=1}^N\sum_{j=1}^{n_t}Z_{ij}^{t}x_i-\sum_{t=1}^N\sum_{j=1}^{n_t}Z_{ij}^{t}a_j^t\|^2}}
    {1+\sqrt{\sum_{t=1}^N\sum_{j=1}^{n_t}\|a_j^t\|^2}},\,
    \theta_P(\mathcal{U})=\frac{\sqrt{\sum_{t=1}^N\|Z^t-Y^t\|_F^2}}{1+\sqrt{\sum_{t=1}^N\sum_{j=1}^{n_t}\|a_j^t\|^2}}.
  \end{align*}
  From the subfigure on the right side of Figure \ref{fig2}, we find that
  the residual KKT residual yielded by BADMM does not descend as the iterate steps increase.
  This means that the relative KKT residual can not be used as the stopping condition for BADMM.
  So, for the subsequent numerical tests, we terminated Algorithm \ref{B-ADMM}
  at the iterate $\mathcal{U}^k=({\mathcal{Z}^{k}},w^{k},x^{k},\mathcal{Y}^{k})$ whenever
  \[
    {\rm pinf}^{k}:=\max_{1\le t\le N}\frac{\|Z^{t,k+1}e_{n_t}-w^{k+1}\|}{1+\|b\|}\le 10^{-4}
    \ \ {\rm for}\ \ k\ge1000
  \]
  or
  \[
    \frac{\max_{0\le i\le9}|f(\mathcal{Z}^{k-i},w^{k-i},x^{k-i})-f(\mathcal{Z}^{k-i-1},w^{k-i-1},x^{k-i-1})|}
    {\max(1,f(\mathcal{Z}^{k},w^{k},x^{k}))}\le 10^{-4}\ \ {\rm for}\ k\ge 3000.
  \]

 Notice that the KKT conditions for the nonconvex problem \eqref{centroid0}
 take the following form
 \begin{subnumcases}{}\label{KKTcenroid0}
   0\in \frac{1}{N}F^t(x)+\lambda^{t}{e_{n_t}}^{\mathbb{T}}+\mathcal{N}_{\Sigma_t}(Z^{t}),\ t=1,\ldots,N;\\
   0\in -\sum_{t=1}^N\lambda^{t}+\mathcal{N}_{\Delta}(w);\\
   0=\sum_{t=1}^N\sum_{j=1}^{n_t}Z_{ij}^{t}x_i-\sum_{t=1}^N\sum_{j=1}^{n_t}Z_{ij}^{t}a_j^t,\ i=1,2,\ldots,m;\\
   0=Z^{t}e_{n_t}-w,\ t=1,\ldots,N.
 \end{subnumcases}
  Denote $\vartheta(\mathcal{V})\!:=\!\max\{\vartheta_P(\mathcal{V}),
  {\displaystyle\max_{i\in\{1,2,3\}}}\vartheta_i(\mathcal{V})\}$ by its relative KKT residual
  at $\mathcal{V}\!=\!(\mathcal{Z},w,x,\lambda)$ with
 \begin{align*}
  &\vartheta_1(\mathcal{V}):=\frac{\sqrt{\sum_{t=1}^N\|Z^t-\Pi_{\Sigma_t}(Z^t-\frac{1}{N}F^t(x)-\lambda^{t}{e_{n_t}}^{\mathbb{T}})\|_F^2}}
   {1+\sqrt{\sum_{t=1}^N\sum_{j=1}^{n_t}\|a_j^t\|^2}},
   \vartheta_2(\mathcal{V}):=\frac{\|w-\Pi_{\Delta}(\sum_{t=1}^N\lambda^t+w)\|}{1+\|b\|}\\
  &\vartheta_3(\mathcal{V}):=\frac{\sqrt{\sum_{i=1}^m\|\sum_{t=1}^N\sum_{j=1}^{n_t}Z_{ij}^{t}x_i-\sum_{t=1}^N\sum_{j=1}^{n_t}Z_{ij}^{t}a_j^t\|^2}}
  {1+\sqrt{\sum_{t=1}^N\sum_{j=1}^{n_t}\|a_j^t\|^2}},
  \vartheta_P(\mathcal{V}):=\frac{\sqrt{\sum_{t=1}^N\|Z^te_{n_t}-w\|^2}}{1+\|b\|}.
  \end{align*}
  \begin{figure}[h]
  \setlength{\abovecaptionskip}{2pt}
  \setlength{\belowcaptionskip}{0pt}
  \begin{center}
  \includegraphics[width=14cm,height=7.0cm]{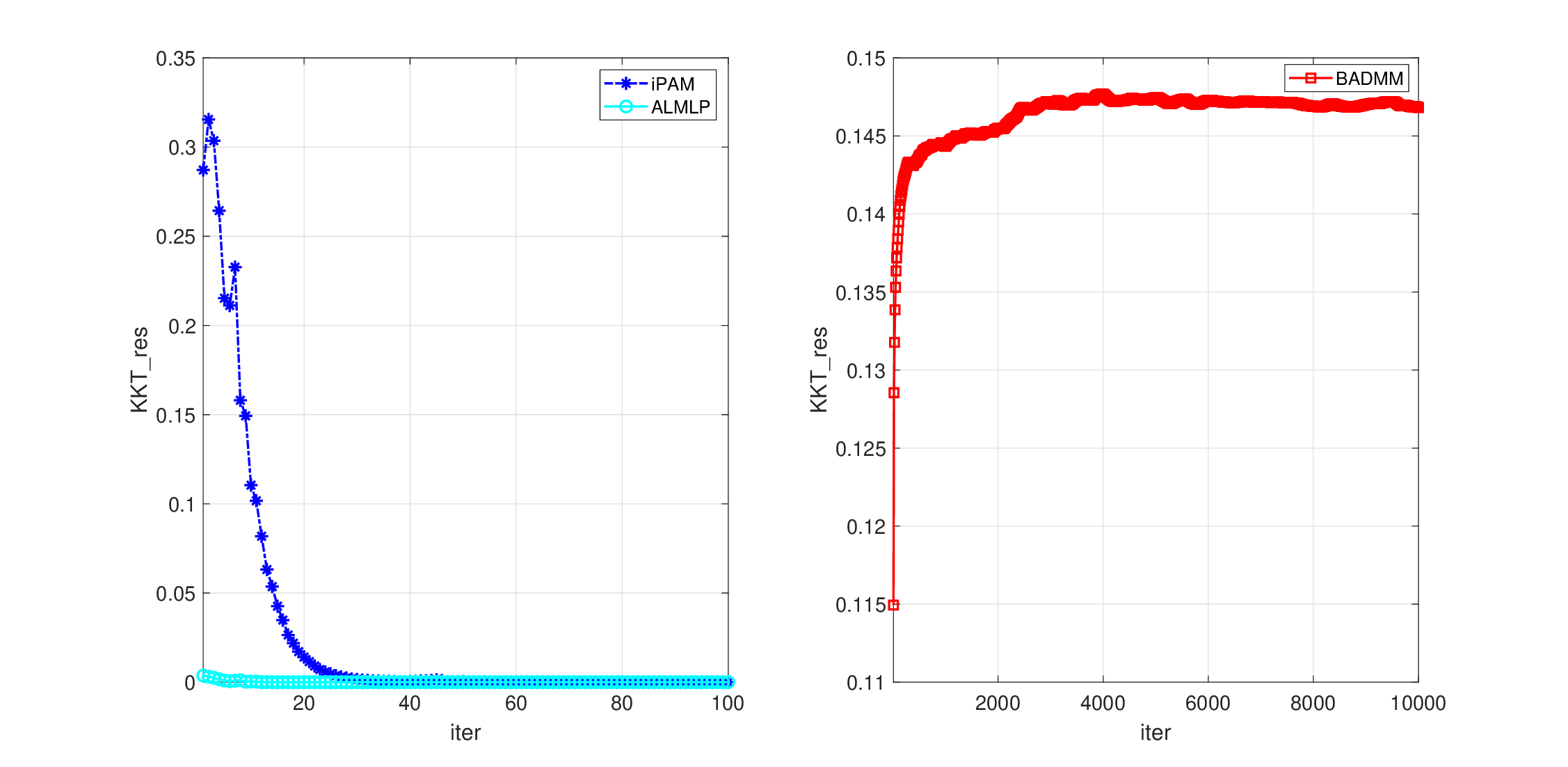}
  \end{center}
  \captionsetup{font={small}}
  \caption{The relative KKT residual curves yielded by three solvers}\label{fig2}
 \end{figure}
 The subfigure on the left side of Figure \ref{fig2} shows that
 the relative KKT residuals yielded by iPAM and ALMLP descend as the iterate
 increases. We terminate iPAM and ALMLP whenever
 $\vartheta(\mathcal{V}^k)\le 5\times 10^{-4}$ for $k\ge 5$, or
 \[
  \frac{\max_{0\le i\le9}|f(\mathcal{Z}^{k-i},w^{k-i},x^{k-i})-f(\mathcal{Z}^{k-i-1},w^{k-i-1},x^{k-i-1})|}
  {\max(1,f(\mathcal{Z}^{k},w^{k},x^{k}))}\le 10^{-4}\ {\rm for}\ k\ge 30.
 \]
  For numerical comparisons with BADMM, we also terminate iPAM when
  ${\rm pinf}^{k}\le 10^{-4}$ or
  \[
    \frac{\max_{0\le i\le9}|f(\mathcal{Z}^{k-i},w^{k-i},x^{k-i})-f(\mathcal{Z}^{k-i-1},w^{k-i-1},x^{k-i-1})|}
    {\max(1,f(\mathcal{Z}^{k},w^{k},x^{k}))}\le 10^{-4}\ \ {\rm for}\ k\ge 30.
  \]
  The above two stopping criterions are respectively named as {\bf stcond A}
  and {\bf stcond B}.

  Unless otherwise stated, for all numerical tests, the three solvers
  are using the same starting point $({\mathcal{Z}^0},w^0,x^0)$, where $\mathcal{Z}^0=0$
  and $(w^0,x^0)$ is same as the one used in the code of \cite{YeWWL17}.

 \subsection{Numerical comparisons among three solvers}\label{sec5.3}

  We shall test the performance of iPAM (i.e., Algorithm \ref{BCD} armed with Algorithm
  \ref{semi-PADMM} for solving the subproblems)\footnote{Our code can be achieved from
  \url{https://github.com/SCUT-OptGroup/Proximal_AM}} for computing Wasserstein
  barycenter of discrete probability distributions with unknown sparse finite supports
  from synthetic and real data, and compare its performance with that of
  ALMLP and BADMM.
  Among others, real data comes from USPS\footnote{\url{http://www.cs.toronto.edu/~roweis/data/usps_all.mat}},
  MNIST\footnote{\url{http://www.cs.toronto.edu/~roweis/data/mnist_all.mat}}
  and BBC News\footnote{\url{http://mlg.ucd.ie/datasets/bbc.html}}.
  Table \ref{table2} summarizes the basic information on the datasets,
  where $\overline{N}$ is the data size, $d$ is the dimension of the support vectors,
  $m$ is the number of support vectors in a barycenter. All numerical results
  are computed by a workstation running on 64-bit Windows Operating System
  with an Intel(R) Xeon(R) W-2245 CPU 3.90GHz and 128 GB RAM.
 \begin{table}[H]
 \setlength{\abovecaptionskip}{2pt}
 \setlength{\belowcaptionskip}{0pt}
 \centering
 \captionsetup{font={small}}
 \caption{Datasets used for the experiments}\label{table2}
  \begin{tabular}{c|ccccc}
  \hline\noalign{\smallskip}
  \raisebox{2.0ex}[15pt]{}
  \mbox{Data} & \quad &$\overline{N}$& $d$ & $m$ & $\frac{1}{N}\sum_{t=1}^Nn_t$\\
  \noalign{\smallskip}\hline\noalign{\smallskip}
  \raisebox{1.75ex}[0pt]{}
  \mbox{Synthetic} & \quad & 50& 2& 10& [400,3600]\\
  \noalign{\smallskip}\hline\noalign{\smallskip}
  Image color  &  \quad & 2000& 3& 60& 6\\
  \noalign{\smallskip}\hline\noalign{\smallskip}
  USPS digits  &  \quad & 11000&  2& 80& 110\\
  MNIST digits &   \quad & 10000& 2& 160& 151\\
  \noalign{\smallskip}\hline\noalign{\smallskip}
  BBC News&  \quad & 2225& 400&  25& 25\\
  \noalign{\smallskip}\hline\noalign{\smallskip}
 \end{tabular}
 \end{table}

 \paragraph{Case 1. Influence of sample size $N$ on three solvers.}
  To test the influence of $N$ on the performance of three solvers,
  we generate a set of $2000$ discrete probability distributions with
  sparse finite supports, obtained from clustering pixel colors of images as
  the paper \cite{Li08} did.

  Figure \ref{fig3} plots the average CPU time, objective value and infeasibility curves
  of iPAM, ALMLP and iPAM+LP under {\bf stcond A} for $10$ independent tests,
  and Figure \ref{fig4} plots the average CPU time, objective value, and infeasibility
  curves of iPAM, BADMM, iPAM+LP and BADMM+LP under {\bf stcond B} for $10$ independent tests.
  iPAM+LP (respectively, BADMM+LP) is same as iPAM (respectively, BADMM)
  except the problem \eqref{centroid0} with $(w,x)$ fixed as $(w^f,x^f)$ is solved with Gurobi,
  and its {\bf objval} is defined by $\frac{1}{N}\langle\mathcal{Z}^*,F(x^{f})\rangle$,
  where $(\mathcal{Z}^f,w^f,x^f)$ denotes the output of a solver
  and $\mathcal{Z}^*$ is the solution obtained by applying Gurobi to the LP
  (i.e., the problem \eqref{centroid0} with $(w,x)$ fixed as $(w^f,x^f)$).
  From Figure \ref{fig3}-\ref{fig4}, we see that iPAM requires less CPU time
  than ALMLP does under {\bf stcond A} and BADMM does under {\bf stcond B},
  and the objective values of its outputs  are remarkable superior to those
  yielded by ALMLP and a little better than those yielded by BADMM. In addition,
  the output of ALMLP has the lowest infeasibility,
  and the infeasibility yielded by iPAM is lower than that of BADMM.

 Table \ref{table3} reports the average number of iterations of iPAM and BADMM
 corresponding to Figure \ref{fig3}, where {\bf subiter} means the average total
 number of iterations of the linearized ADMM for solving subproblem \eqref{subprob1-Zw}.
 We see that the average total number of iterations of the linearized ADMM is
 much less than the average number of iterations of BADMM.
 \begin{figure}[H]
  \setlength{\abovecaptionskip}{2pt}
 \setlength{\belowcaptionskip}{0pt}
  \begin{center}
  \includegraphics[width=15cm,height=6.0cm]{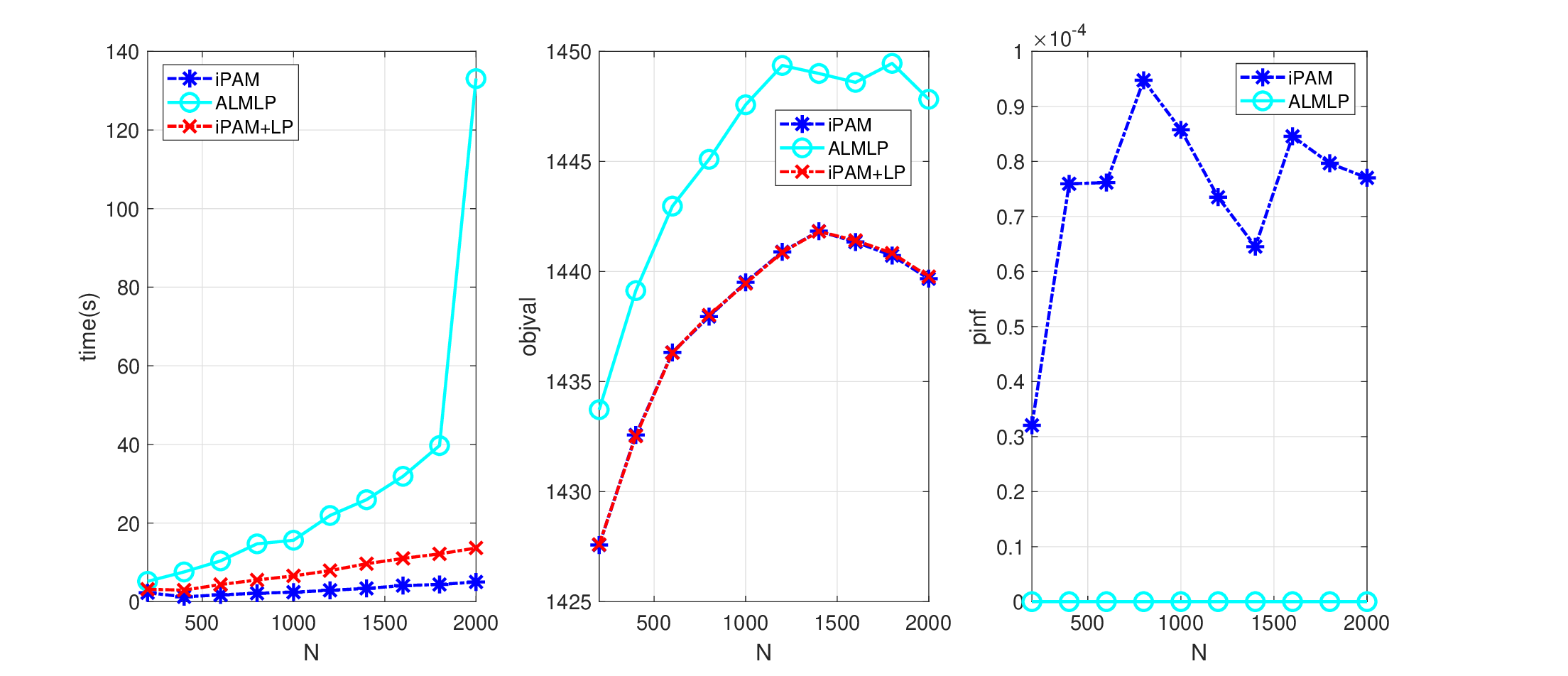}
  \end{center}
  \captionsetup{font={small}}
  \caption{Numerical comparisons among iPAM, iPAM+LP and ALMLP for different $N$ with $m=60$}
  \label{fig3}
 \end{figure}
 \vspace{-0.5cm}
 \begin{figure}[H]
  \begin{center}
  \includegraphics[width=15cm,height=6.0cm]{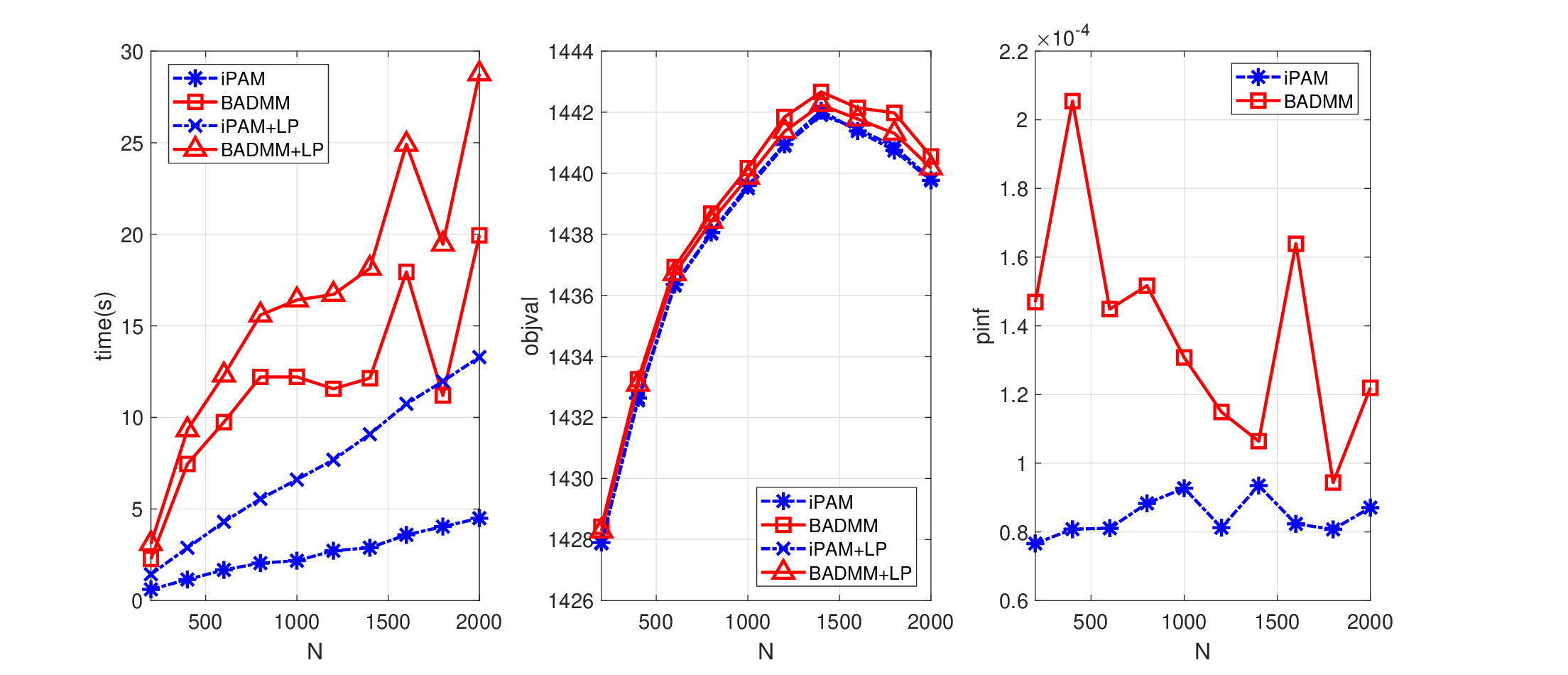}
  \end{center}
  \captionsetup{font={small}}
  \caption{Numerical comparisons among iPAM, iPAM+LP, BADMM and BADMM+LP for different $N$ with $m=60$}
  \label{fig4}
 \end{figure}
 \vspace{-0.5cm}
 \begin{table}[H]
  \setlength{\abovecaptionskip}{2pt}
 \setlength{\belowcaptionskip}{0pt}
 \caption{Average number of iterations of iPAM and BADMM corresponding to Figure \ref{fig4}}
 \label{table3}
 \tiny
 \centering
 \begin{tabular}{c|c|c|c|c|c|c|c|c|c|c}
 \hline\noalign{\smallskip}
 {\bf iter} & \multicolumn{10}{c}{$N$}\\
\cmidrule(ll){2-11}
({\bf subiter})&$200$&$400$&$600$&$800$&$1000$&$1200$&$1400$&$1600$&$1800$&$2000$\\
\noalign{\smallskip}\hline\noalign{\smallskip}
\multirow{2}*{PAM}&35.6&35.7&35.4&35.2&33.6&34.5&33.0&34.3&34.6&34.4\\
&(328.6) &(280.4) &(301.1) &(278.3) &(232.4)&(250.2)&(226.2)&(255.3)&(254.9)&(258.3) \\
\noalign{\smallskip}\hline\noalign{\smallskip}
\multirow{2}*{BADMM}& \multirow{2}*{2840}& \multirow{2}*{2720}& \multirow{2}*{2540}
& \multirow{2}*{2420}& \multirow{2}*{2040}& \multirow{2}*{1680}& \multirow{2}*{1580}& \multirow{2}*{2080}
& \multirow{2}*{1160}& \multirow{2}*{1900}\\
& & & & &&&&&&\\
\noalign{\smallskip}\hline\noalign{\smallskip}
\end{tabular}
\end{table}

 \paragraph{Case 2. Influence of number of support points $m$ on three solvers.}
  We test the influence of $m$ on the performance of three solvers by using
  the example in Case 1. Figure \ref{fig5} plots the average CPU time,
  objective value and infeasibility curves of iPAM, ALMLP and iPAM+LP under
  {\bf stcond A} for $10$ independent tests, and Figure \ref{fig6} plots
  the average CPU time, objective value and infeasibility curves of iPAM, BADMM,
  iPAM+LP and BADMM+LP under {\bf stcond B} for $10$ independent tests.
  We see that iPAM requires less CPU time than ALMLP and BADMM do,
  and the objective values of its outputs are better than those yielded by
  ALMLP and BADMM. Similarly, the infeasibility yielded by ALMLP is
  the lowest, and the infeasibility of the output of iPAM is lower than
  that of the output of BADMM.

  Table \ref{table4} reports the average number of iterations of iPAM and BADMM
  corresponding to Figure \ref{fig6}. We see that the average total number
  of iterations for the linearized ADMM is also much less than the average
  number of iterations of BADMM in this scenario.
  \vspace{-0.5cm}
  \begin{figure}[H]
  \setlength{\abovecaptionskip}{2pt}
 \setlength{\belowcaptionskip}{0pt}
  \begin{center}
  \includegraphics[width=15cm,height=6.0cm]{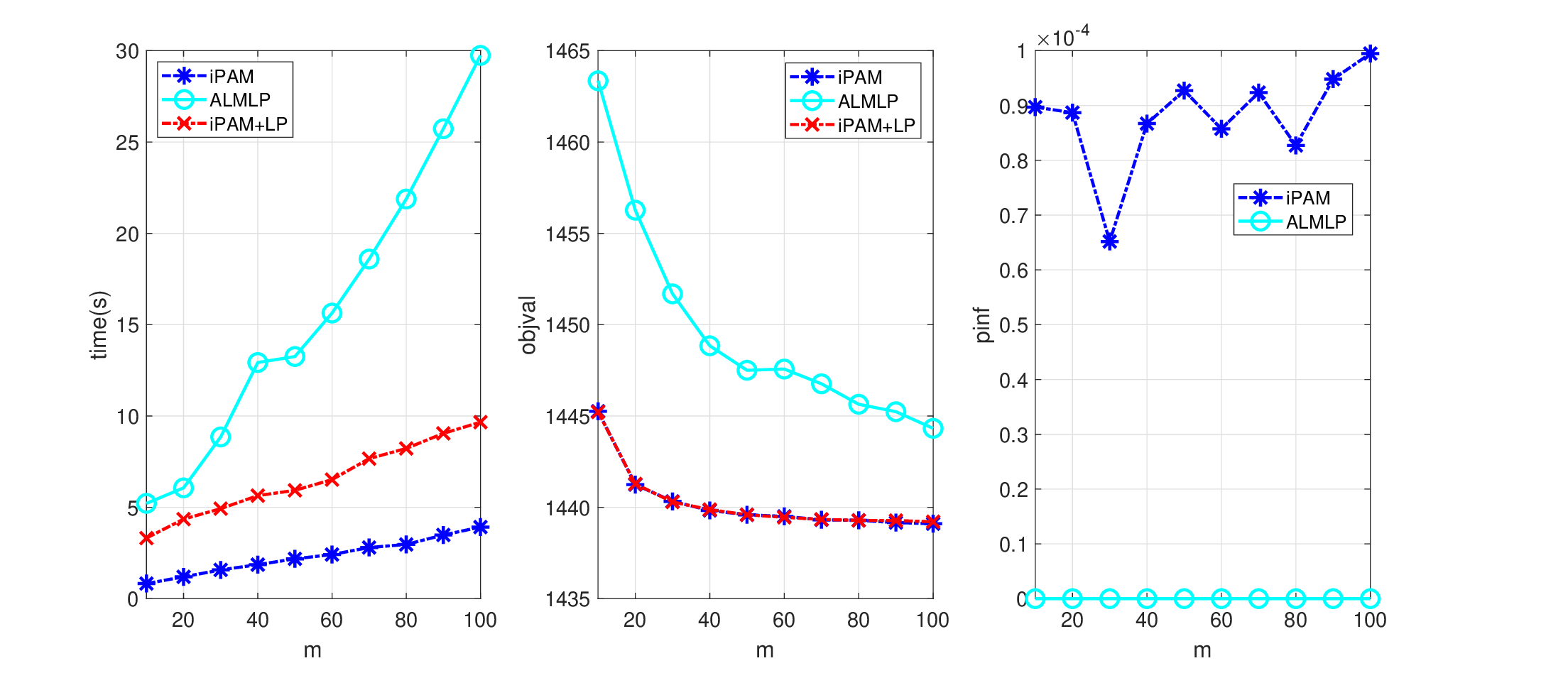}
  \end{center}
  \captionsetup{font={small}}
  \caption{Numerical comparisons among iPAM, iPAM+LP and ALMLP for different $m$ with $N=1000$}
  \label{fig5}
 \end{figure}
 \vspace{-1cm}
 \begin{figure}[H]
  \setlength{\abovecaptionskip}{2pt}
 \setlength{\belowcaptionskip}{0pt}
  \begin{center}
  \includegraphics[width=15cm,height=6.0cm]{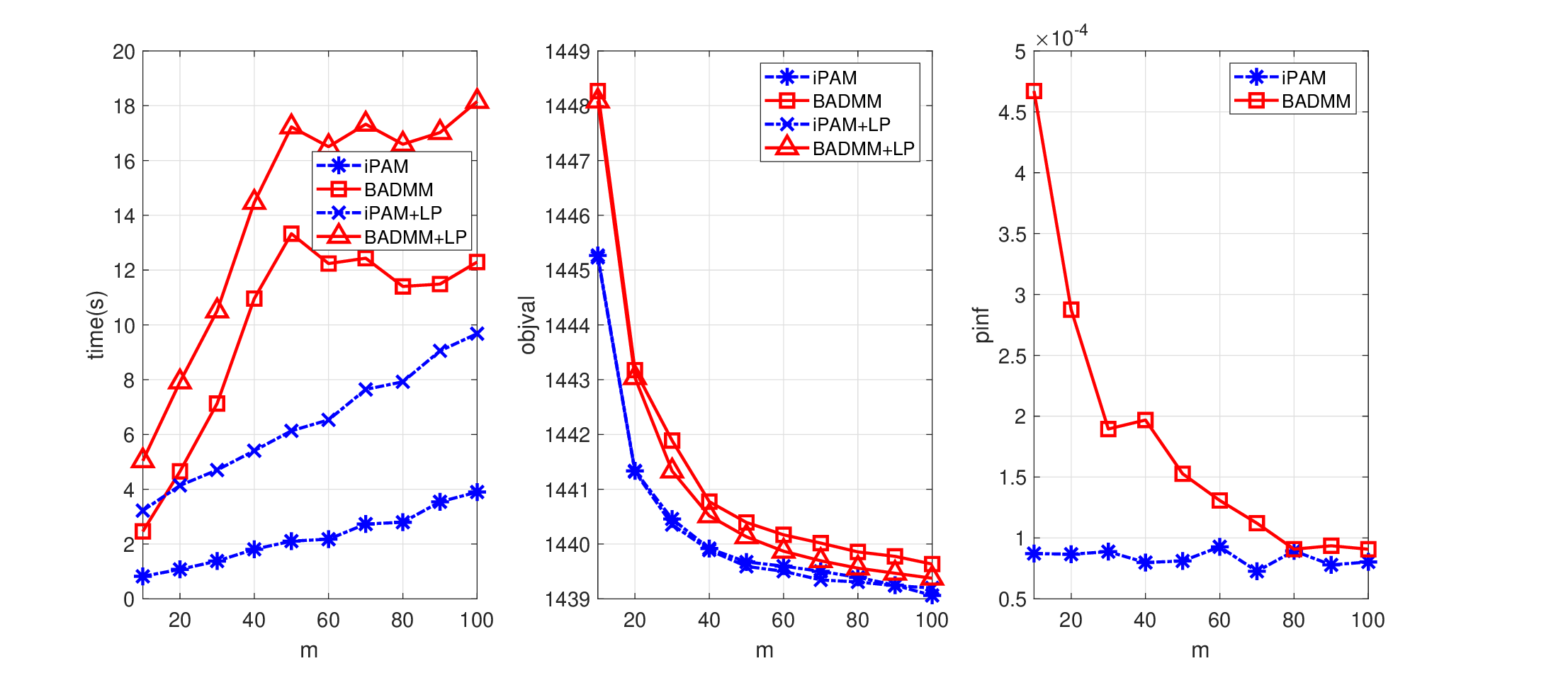}
  \end{center}
   \captionsetup{font={small}}
  \caption{Numerical comparisons of iPAM, BADMM, iPAM+LP and BADMM+LP for different $m$ with $N=1000$}
  \label{fig6}
 \end{figure}
 \begin{table}[H]
  \setlength{\abovecaptionskip}{2pt}
 \setlength{\belowcaptionskip}{0pt}
 \caption{Average number of iterations of iPAM and BADMM corresponding to Figure \ref{fig6}}
 \label{table4}
 \tiny
 \centering
 \begin{tabular}{c|c|c|c|c|c|c|c|c|c|c}
 \hline\noalign{\smallskip}
 {\bf iter} & \multicolumn{10}{c}{$m$}\\
\cmidrule(ll){2-11}
({\bf subiter})&$10$&$20$&$30$&$40$&$50$&$60$&$70$&$80$&$90$&$100$\\
\noalign{\smallskip}\hline\noalign{\smallskip}
\multirow{2}*{iPAM}&35.6&34.8&34.0&34.9&34.8&33.6&34.9&34.3&35.6&35.3\\
&(469.9) &(357.7) &(274.0) &(296.0) &(276.0)&(232.4)&(259.5)&(228.9)&(267.5)&(266.0) \\
\noalign{\smallskip}\hline\noalign{\smallskip}
\multirow{2}*{BADMM}& \multirow{2}*{3000}& \multirow{2}*{3000}& \multirow{2}*{2080}
& \multirow{2}*{2640}& \multirow{2}*{2560}& \multirow{2}*{2040}& \multirow{2}*{1840}& \multirow{2}*{1540}
& \multirow{2}*{1400}& \multirow{2}*{1380}\\
& & & & &&&&&&\\
\noalign{\smallskip}\hline\noalign{\smallskip}
\end{tabular}
\end{table}

 \paragraph{Case 3. Influence of the dimension of samples on three solvers.}
  To test the influence of $n_t$ on the performance of three solvers,
  we generate nine sets of $50$ discrete probability distributions
  with $N=50$ and $m=10$ for $n_t=400:400:3600$ in the same way
  as the paper \cite{YeWWL17} did, in which the support vectors are generated
  by sampling from a multivariate normal distribution and adding a heavy-tailed
  noise from the student's t-distribution.

  Figure \ref{fig7} plots the average CPU time, objective value and infeasibility curves
  of iPAM, ALMLP and iPAM+LP under {\bf stcond A} for $10$ independent tests,
  and Figure \ref{fig8} plots the CPU time, objective value and infeasibility curves
  of iPAM, BADMM, iPAM+LP and BADMM+LP under {\bf stcond B} for $10$ independent tests.
  We see that iPAM requires less CPU time than ALMLP and BADMM do,
  and the outputs of three solvers have comparable objective values.
  Similar to Case 1-2, the infeasibility yielded by ALMLP is the lowest,
  while that of BADMM is the highest.

  Table \ref{table5} reports the average number of iterations of iPAM and BADMM
  corresponding to Figure \ref{fig8}. We see that the average total number of
  iterations for the linearized ADMM is less than that of BADMM, although now
  it is almost three times more than that of Case 1-2.
  \vspace{-1cm}
 \begin{figure}[H]
  \setlength{\abovecaptionskip}{2pt}
 \setlength{\belowcaptionskip}{0pt}
  \begin{center}
  \includegraphics[width=15cm,height=6.0cm]{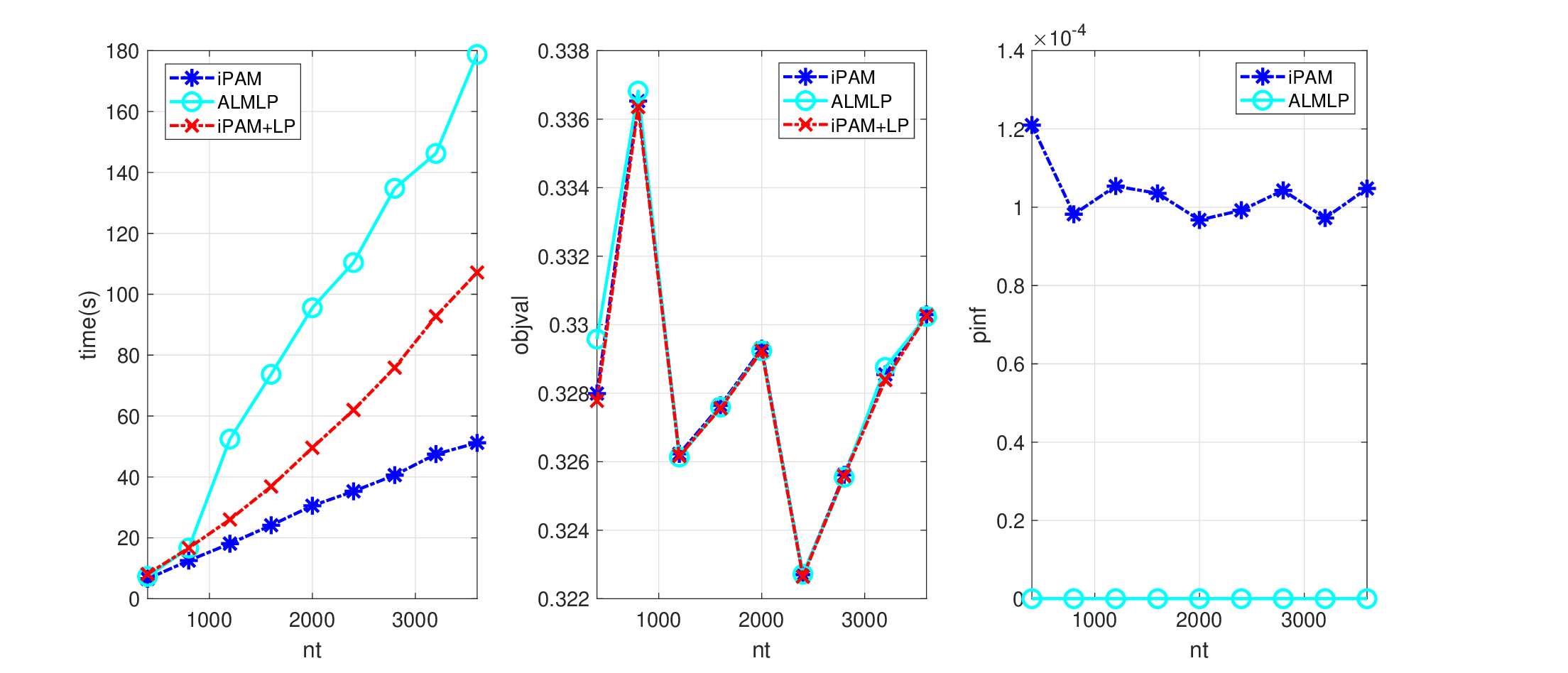}
  \end{center}
   \captionsetup{font={small}}
  \caption{Numerical comparisons among iPAM, iPAM+LP and ALMLP for different $n_t$}
  \label{fig7}
 \end{figure}
 \vspace{-0.5cm}
 \begin{figure}[H]
  \setlength{\abovecaptionskip}{2pt}
 \setlength{\belowcaptionskip}{0pt}
  \begin{center}
  \includegraphics[width=15cm,height=6.0cm]{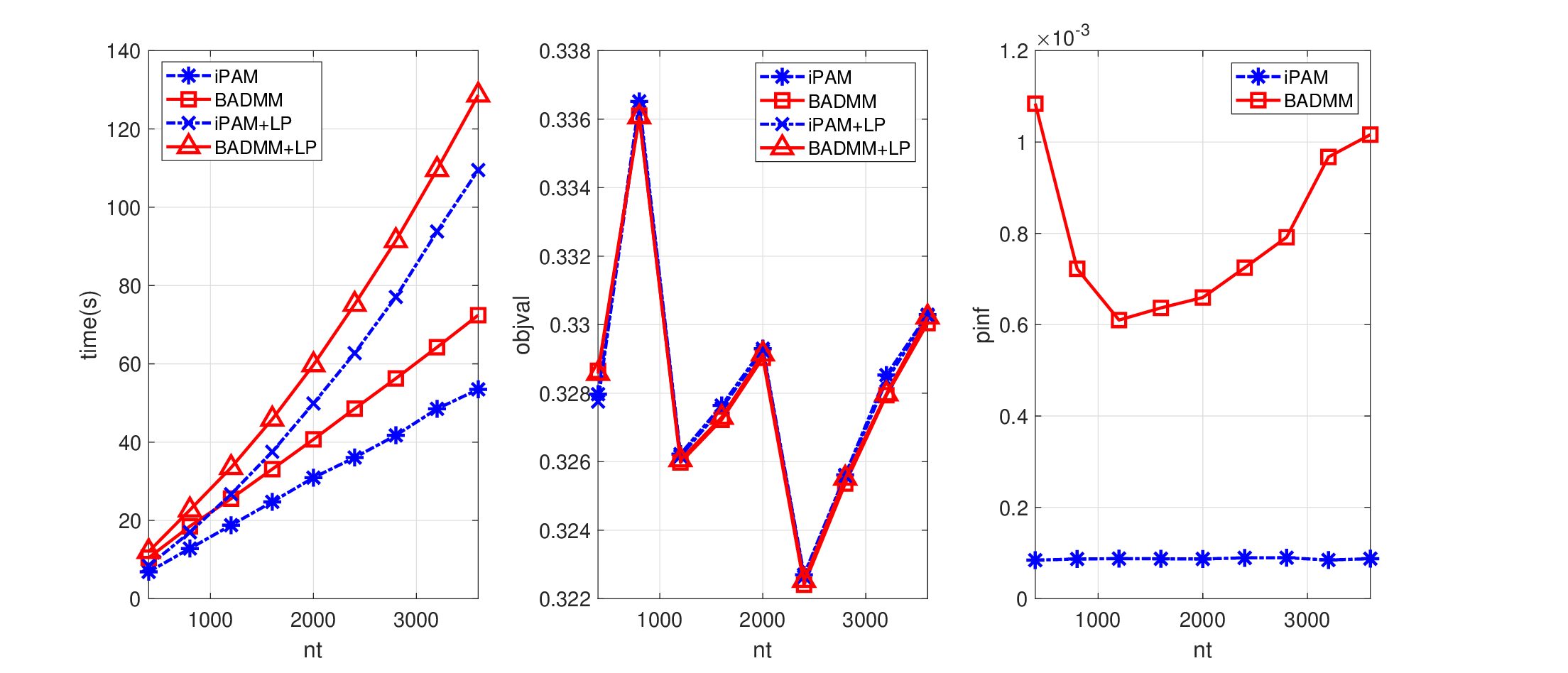}
  \end{center}
   \captionsetup{font={small}}
  \caption{Numerical comparisons among iPAM, BADMM, iPAM+LP and BADMM+LP for different $n_t$}
  \label{fig8}
 \end{figure}
 \begin{table}[h]
  \setlength{\abovecaptionskip}{2pt}
 \setlength{\belowcaptionskip}{0pt}
 \caption{Average number of iterations of iPAM and BADMM corresponding to Figure \ref{fig8}}
 \label{table5}
 \tiny
 \centering
 \begin{tabular}{c|c|c|c|c|c|c|c|c|c}
 \hline\noalign{\smallskip}
 {\bf iter} & \multicolumn{9}{c}{$n_t$}\\
\cmidrule(ll){2-10}
({\bf subiter})&$400$&$800$&$1200$&$1600$&$2000$&$2400$&$2800$&$3200$&$3600$\\
\noalign{\smallskip}\hline\noalign{\smallskip}
\multirow{2}*{PAM}&21.1&17.5&15.7&14.5&13.3&12.4&11.5&11.3&10.8\\
&(1586.5) &(1510.9) &(1479.7) &(1455.4) &(1444.2)&(1394.1)&(1377.1)&(1400.1)&(1363.7) \\
\noalign{\smallskip}\hline\noalign{\smallskip}
\multirow{2}*{BADMM}& \multirow{2}*{3000}& \multirow{2}*{3000}& \multirow{2}*{3000}
& \multirow{2}*{3000}& \multirow{2}*{3000}& \multirow{2}*{3000}& \multirow{2}*{3000}& \multirow{2}*{3000}
& \multirow{2}*{3000}\\
& & & & &&&&&\\
\noalign{\smallskip}\hline\noalign{\smallskip}
\end{tabular}
\end{table}

 \paragraph{Case 4: Numerical performance on some real data.}
 In this part, we test the performance of three solvers on some
 real data described in Example \ref{example5.3}-\ref{example5.4} below.
 \begin{example}\label{example5.3}
  We obtain a set of 2225 discrete distributions from BBC News dataset
  that is divided into five classes. The average number of support points
  is about $25$ and the dimension of every support is $400$. The texts
  are treated as a bag of words, where the support vector is
  the vocabulary of the whole document and the weight corresponds to
  the appearing frequency of words.
 \end{example}

 Table \ref{table6} reports the numerical results of three solvers
 for Example \ref{example5.3}, where the second row lists the results of iPAM
 under {\bf stcond B}. We see that iPAM yields a little better objective values
 than ALMLP does within comparable CPU time, and it also yields comparable
 even a little better objective values than BADMM does within less one fifth
 of the CPU time of the latter. The objective values yielded by iPAM+LP
 are a little better than those of iPAM.

 \begin{example}\label{example5.4}
  We obtain a set of $11000$ discrete distributions from USPS Handwritten Digits
  which is divided into ten classes and the average number of support points
  is around $110$. The digit images are treated as normalized histograms over
  the pixel locations covered by the digits, where the support vector
  is the 2D coordinate of a pixel and the weight corresponds to pixel intensity.
  \end{example}

 Table \ref{table7} reports the numerical results of three solvers
 for Example \ref{example5.4}, where the second row lists the results of iPAM
 under {\bf stcond B}. We see that iPAM yields better objective values than ALMLP does
 within much less CPU time, and it yields comparable even better objective values
 than BADMM does within less CPU time. Similar to Table \ref{table6},
 the objective values given by iPAM+LP are a little better than those yielded by iPAM.
 \begin{example}\label{example5.5}
  We obtain a set of $10000$ discrete distributions from MNIST Handwritten Digits
  that is divided into ten classes and the average number of support points is around $151$.
  The digit images are treated as normalized histograms over the pixel locations
  covered by the digits, where the support vector is the 2D coordinate of a pixel
  and the weight corresponds to pixel intensity.
  \end{example}

 Table \ref{table8} reports the numerical results of three solvers
 for Example \ref{example5.5}, where the second row lists the results of iPAM
 under {\bf stcond B}. We see that iPAM yields better objective values
 than ALMLP does, its CPU time is at least less than one fifth of the CPU time
 of ALMLP, and it also yields comparable even better objective values
 than BADMM does within less CPU time. Similarly, the objective values
 given by iPAM+LP are a little better than those given by iPAM.

 \section{Conclusions}\label{sec6}

  We have developed a globally convergent inexact PAM method for computing
  an approximate Wasserstein barycenter with unknown supports by designing
  a tailored linearized ADMM for solving the strongly convex QP subproblems.
  Numerical comparisons with the 3-block B-ADMM in \cite{YeWWL17} on synthetic
  and real data show that the proposed iPAM method has an advantage in reducing
  the computing time for large-scale problems while guaranteeing the quality
  of solutions. In our future research work, we will focus on the application of
  the iPAM method in the D2-clustering for image and document data.

\begin{acknowledgements}
 The authors would like to express their sincere thanks to Dr. Jianbo Ye
 for sharing us with their codes. The authors would like to give their
 sincere thanks to two anonymous reviewers for their comments,
 which are very helpful to improve the quality of the origin manuscript.
\end{acknowledgements}

 \bigskip
 \noindent
  {\large\bf Appendix}
 \begin{algorithm}[h]
 \caption{\label{D2-clustering}{(\bf D2-Clustering)}}
 \textbf{Initialization:} Initialize the set of centroids $\{Q^{1,0},Q^{2,0},\ldots,Q^{K,0}\}$.\\
 \textbf{For} $k=1,2,\ldots$ \textbf{do}
 \begin{itemize}
  \item[1.]  \textbf{for} $t=1,2,\ldots,N$ \textbf{do}\ \ (assignment step)
              \begin{equation}\label{P-index}
               l^{t,k}:=\mathop{\arg\min}_{s\in\{1,\ldots,K\}}W^2(Q^{s,k-1},P^{t})\qquad\qquad\qquad\qquad
               \end{equation}
              \textbf{end\ for}
  \item[2.] \textbf{for} $s=1,2,\ldots,K$ \textbf{do}\ \ (optimization step)
               \begin{equation}\label{centroidpart}
                Q^{s,k}\in\mathop{\arg\min}_{Q}\sum_{l^{t,k}=s}\!W^2(Q,P^{t})\qquad\qquad\qquad\qquad
               \end{equation}
               \textbf{end\ for}
 \end{itemize}
 \textbf{end\ For}\\
 \textbf{Return} the index set $\{l^{1,k},\ldots,l^{N,k}\}$ and the set of centroids $\{Q^{1,k},\ldots,Q^{K,k}\}$.
 \end{algorithm}
 
 \begin{sidewaystable}[h]
 \vspace{10cm}
  \setlength{\abovecaptionskip}{2pt}
  \setlength{\belowcaptionskip}{0pt}
  \centering
  \captionsetup{font={small}}
  \caption{Wasserstein Barycenters yielded by three solvers on BBC News dataset with $m=25$}\label{table6}
  \tiny
 \begin{tabular}{c|cc|cccc|cc|cccc|cc}
 \hline\noalign{\smallskip}
 &\multicolumn{2}{c|}{ALMLP} &\multicolumn{4}{c|}{iPAM}&\multicolumn{2}{c|}{iPAM+LP}&\multicolumn{4}{c|}{BADMM}&\multicolumn{2}{c}{BADMM+LP}\\
 \cmidrule(lr){2-3}\cmidrule(lr){4-7}\cmidrule(lr){8-9}\cmidrule(lr){10-13}\cmidrule(lr){14-15}
 \multicolumn{1}{c|}{\bf Class} & {\bf time(s)} & {\bf objval} & {\bf time(s)} &{\bf objval} & {\bf pinf} & {\bf iter(subiter)}& {\bf time(s)} & {\bf objval}
 & {\bf time(s)} &{\bf objval} & {\bf pinf} & {\bf iter}& {\bf time(s)} & {\bf objval}\\
 \noalign{\smallskip}\hline\noalign{\smallskip}

  \multirow{2}*{1}& \multirow{2}*{6.32}& \multirow{2}*{21.692} & 7.82 & 21.673& 1.34e-4&32(695)&10.47&21.672& \multirow{2}*{43.36}& \multirow{2}*{21.694}&\multirow{2}*{6.36e-4}&\multirow{2}*{3000}&\multirow{2}*{46.26}&\multirow{2}*{21.693}\\
  & & & 8.01 & 21.673& 7.91e-5&33(708)& 10.54 & 21.672& & & & & & \\
 \noalign{\smallskip}\hline\noalign{\smallskip}

 \multirow{2}*{2}& \multirow{2}*{11.25}& \multirow{2}*{10.336} & 6.71 & 10.331& 9.58e-5&32(850)&8.97&10.330& \multirow{2}*{33.06}& \multirow{2}*{10.329}&\multirow{2}*{4.23e-4}&\multirow{2}*{3000}&\multirow{2}*{35.44}&\multirow{2}*{10.329}\\
  & & & 6.70 & 10.331& 9.58e-5&32(850)& 8.98 & 10.330& & & & & & \\
 \noalign{\smallskip}\hline\noalign{\smallskip}

 \multirow{2}*{3}& \multirow{2}*{12.85}& \multirow{2}*{14.412} & 6.71 & 14.388& 1.45e-4&32(741)&9.07&14.388& \multirow{2}*{35.05}& \multirow{2}*{14.404}&\multirow{2}*{5.93e-4}&\multirow{2}*{3000}&\multirow{2}*{37.59}&\multirow{2}*{14.403}\\
  & & & 7.41 & 14.388& 8.50e-5&34(855)& 9.82 & 14.388& & & & & & \\
 \noalign{\smallskip}\hline\noalign{\smallskip}

 \multirow{2}*{4}& \multirow{2}*{14.73}& \multirow{2}*{9.827} & 8.44 & 9.807& 7.61e-5&32(801)&11.32&9.806& \multirow{2}*{42.69}& \multirow{2}*{9.815}&\multirow{2}*{4.88e-4}&\multirow{2}*{3000}&\multirow{2}*{46.05}&\multirow{2}*{9.814}\\
  & & & 8.44 & 9.807& 7.61e-5&32(801)& 11.47 & 9.806& & & & & & \\
 \noalign{\smallskip}\hline\noalign{\smallskip}

 \multirow{2}*{5}& \multirow{2}*{13.71}& \multirow{2}*{14.678} & 6.70 & 14.670& 6.89e-5&32(800)&8.98&14.669& \multirow{2}*{33.93}& \multirow{2}*{14.671}&\multirow{2}*{6.18e-4}&\multirow{2}*{3000}&\multirow{2}*{36.38}&\multirow{2}*{14.670}\\
  & & & 6.71 & 14.670& 6.89e-5&32(800)& 9.10 & 14.669& & & & & & \\
 \noalign{\smallskip}\hline\noalign{\smallskip}
 \end{tabular}
 \end{sidewaystable}

 \begin{sidewaystable}[h]
 \vspace{15.5cm}
  \setlength{\abovecaptionskip}{2pt}
  \setlength{\belowcaptionskip}{0pt}
  \centering
  \captionsetup{font={small}}
  \caption{Wasserstein Barycenter yielded by three solvers on USPS dataset with $m=80$}\label{table7}
  \tiny
 \begin{tabular}{c|cc|cccc|cc|cccc|cc}
 \hline\noalign{\smallskip}
 &\multicolumn{2}{c|}{ALMLP} &\multicolumn{4}{c|}{iPAM}&\multicolumn{2}{c|}{iPAM+LP}&\multicolumn{4}{c|}{BADMM}&\multicolumn{2}{c}{BADMM+LP}\\
 \cmidrule(lr){2-3}\cmidrule(lr){4-7}\cmidrule(lr){8-9}\cmidrule(lr){10-13}\cmidrule(lr){14-15}
 \multicolumn{1}{c|}{\bf Class} & {\bf time(s)} & {\bf objval} & {\bf time(s)} &{\bf objval} & {\bf pinf} & {\bf iter(subiter)}& {\bf time(s)} & {\bf objval}
 & {\bf time(s)} &{\bf objval} & {\bf pinf} & {\bf iter}& {\bf time(s)} & {\bf objval}\\
 \noalign{\smallskip}\hline\noalign{\smallskip}

 \multirow{2}*{1}& \multirow{2}*{568.15}& \multirow{2}*{4.449} & 157.54 & 4.406& 2.97e-5&27(1222)&198.31&4.404& \multirow{2}*{286.15}& \multirow{2}*{4.408}&\multirow{2}*{1.19e-4}&\multirow{2}*{3000}&\multirow{2}*{332.22}&\multirow{2}*{4.408}\\
  & & & 107.77 & 4.406& 8.34e-5&21(835)& 147.77 & 4.405& & & & & & \\
  \noalign{\smallskip}\hline\noalign{\smallskip}

  \multirow{2}*{2}& \multirow{2}*{1014.28}& \multirow{2}*{4.832} & 204.65 & 4.810& 3.24e-5&25(1275)&255.54&4.807& \multirow{2}*{358.06}& \multirow{2}*{4.807}&\multirow{2}*{1.26e-4}&\multirow{2}*{3000}&\multirow{2}*{415.53}&\multirow{2}*{4.809}\\
  & & & 142.71 & 4.812& 1.00e-4&19(885)& 193.67 & 4.809& & & & & & \\
  \noalign{\smallskip}\hline\noalign{\smallskip}

  \multirow{2}*{3}& \multirow{2}*{774.08}& \multirow{2}*{4.060} & 210.88 & 4.042& 3.17e-5&25(1290)&262.76&4.039& \multirow{2}*{366.12}& \multirow{2}*{4.038}&\multirow{2}*{1.16e-4}&\multirow{2}*{3000}&\multirow{2}*{424.77}&\multirow{2}*{4.039}\\
  & & & 158.35 & 4.044& 8.50e-5&20(965)& 210.39 & 4.040& & & & & & \\
  \noalign{\smallskip}\hline\noalign{\smallskip}

  \multirow{2}*{4}& \multirow{2}*{705.05}& \multirow{2}*{4.133} & 166.99 & 4.096& 2.96e-5&26(1204)&210.24&4.094& \multirow{2}*{248.05}& \multirow{2}*{4.098}&\multirow{2}*{9.37e-5}&\multirow{2}*{2400}&\multirow{2}*{297.57}&\multirow{2}*{4.098}\\
  & & & 111.23 & 4.098& 8.74e-5&20(799)& 154.36 & 4.095& & & & & & \\
  \noalign{\smallskip}\hline\noalign{\smallskip}

  \multirow{2}*{5}& \multirow{2}*{957.64}& \multirow{2}*{4.595} & 196.15 & 4.563& 3.89e-5&26(1313)&242.84&4.561& \multirow{2}*{290.19}& \multirow{2}*{4.567}&\multirow{2}*{9.74e-5}&\multirow{2}*{2600}&\multirow{2}*{344.38}&\multirow{2}*{4.568}\\
  & & & 143.69 & 4.565& 7.41e-5&21(963)& 190.70 & 4.562& & & & & & \\
  \noalign{\smallskip}\hline\noalign{\smallskip}
 \multirow{2}*{6}& \multirow{2}*{1463.72}& \multirow{2}*{3.455} & 206.88 & 3.426& 2.77e-5&25(1243)&259.75&3.423& \multirow{2}*{374.02}& \multirow{2}*{3.427}&\multirow{2}*{1.16e-4}&\multirow{2}*{3000}&\multirow{2}*{433.69}&\multirow{2}*{3.427}\\
  & & & 141.71 & 3.427& 8.43e-5&19(848)& 194.77 & 3.424& & & & & & \\
  \noalign{\smallskip}\hline\noalign{\smallskip}

  \multirow{2}*{7}& \multirow{2}*{763.86}& \multirow{2}*{5.232} & 163.46 & 5.204& 3.78e-5&26(1255)&203.91&5.199& \multirow{2}*{291.78}& \multirow{2}*{5.208}&\multirow{2}*{1.91e-4}&\multirow{2}*{3000}&\multirow{2}*{337.73}&\multirow{2}*{5.209}\\
  & & & 127.92 & 5.206& 8.36e-5&22(971)& 169.28 & 5.201& & & & & & \\
  \noalign{\smallskip}\hline\noalign{\smallskip}

  \multirow{2}*{8}& \multirow{2}*{1305.74}& \multirow{2}*{3.557} & 224.56 & 3.500& 2.72e-5&25(1293)&280.97&3.497& \multirow{2}*{388.02}& \multirow{2}*{3.507}&\multirow{2}*{9.76e-5}&\multirow{2}*{3000}&\multirow{2}*{453.18}&\multirow{2}*{3.506}\\
  & & & 152.55 & 3.501& 9.97e-5&19(876)& 209.39 & 3.498& & & & & & \\
  \noalign{\smallskip}\hline\noalign{\smallskip}

  \multirow{2}*{9}& \multirow{2}*{1533.53}& \multirow{2}*{3.932} & 197.28 & 3.884& 2.88e-5&25(1225)&248.87&3.881& \multirow{2}*{359.36}& \multirow{2}*{3.887}&\multirow{2}*{1.00e-4}&\multirow{2}*{3000}&\multirow{2}*{417.12}&\multirow{2}*{3.887}\\
  & & & 145.11 & 3.885& 9.60e-5&20(897)& 196.83 & 3.882& & & & & & \\
  \noalign{\smallskip}\hline\noalign{\smallskip}

  \multirow{2}*{10}& \multirow{2}*{1056.10}& \multirow{2}*{2.037} & 223.14 & 2.002& 2.85e-5&25(1237)&282.53&1.997& \multirow{2}*{244.04}& \multirow{2}*{2.011}&\multirow{2}*{9.37e-5}&\multirow{2}*{1800}&\multirow{2}*{310.09}&\multirow{2}*{2.005}\\
  & & & 137.45 & 2.003& 9.66e-5&18(758)& 196.81 & 1.999& & & & & & \\
  \noalign{\smallskip}\hline\noalign{\smallskip}
 \end{tabular}
 \end{sidewaystable}

 \begin{sidewaystable}[h]
 \vspace{13cm}
  \setlength{\abovecaptionskip}{2pt}
  \setlength{\belowcaptionskip}{0pt}
  \centering
  \captionsetup{font={small}}
  \caption{Wasserstein Barycenter yielded by three solvers on MNIST dataset with $m=160$}\label{table8}
  \tiny
 \begin{tabular}{c|cc|cccc|cc|cccc|cc}
 \hline\noalign{\smallskip}
 &\multicolumn{2}{c|}{ALMLP} &\multicolumn{4}{c|}{iPAM}&\multicolumn{2}{c|}{iPAM+LP}&\multicolumn{4}{c|}{BADMM}&\multicolumn{2}{c}{BADMM+LP}\\
 \cmidrule(lr){2-3}\cmidrule(lr){4-7}\cmidrule(lr){8-9}\cmidrule(lr){10-13}\cmidrule(lr){14-15}
 \multicolumn{1}{c|}{\bf Class} & {\bf time(s)} & {\bf objval} & {\bf time(s)} &{\bf objval} & {\bf pinf} & {\bf iter(subiter)}& {\bf time(s)} & {\bf objval}
 & {\bf time(s)} &{\bf objval} & {\bf pinf} & {\bf iter}& {\bf time(s)} & {\bf objval}\\
 \noalign{\smallskip}\hline\noalign{\smallskip}

 \multirow{2}*{1}& \multirow{2}*{4695.28}& \multirow{2}*{2.722} & 596.36 & 2.683& 2.71e-5&23(1239)&786.62&2.678& \multirow{2}*{724.51}& \multirow{2}*{2.687}&\multirow{2}*{8.16e-5}&\multirow{2}*{2200}&\multirow{2}*{922.39}&\multirow{2}*{2.681}\\
  & & & 392.29 & 2.693& 9.62e-5&17(810)& 582.02 & 2.679& & & & & & \\
  \noalign{\smallskip}\hline\noalign{\smallskip}

  \multirow{2}*{2}& \multirow{2}*{2776.96}& \multirow{2}*{2.831} & 274.07 & 2.793& 3.23e-5&27(1071)&359.58&2.793& \multirow{2}*{291.33}& \multirow{2}*{2.807}&\multirow{2}*{8.19e-5}&\multirow{2}*{1600}&\multirow{2}*{384.73}&\multirow{2}*{2.798}\\
  & & & 178.27 & 2.800& 9.46e-5&21(692)& 264.29 & 2.794& & & & & & \\
  \noalign{\smallskip}\hline\noalign{\smallskip}

  \multirow{2}*{3}& \multirow{2}*{5760.40}& \multirow{2}*{4.738} & 599.48 & 4.718& 3.19e-5&24(1326)&775.90&4.711& \multirow{2}*{885.09}& \multirow{2}*{4.714}&\multirow{2}*{7.85e-5}&\multirow{2}*{2800}&\multirow{2}*{1066.42}&\multirow{2}*{4.713}\\
  & & & 408.40 & 4.720& 9.04e-5&18(901)& 581.10 & 4.712& & & & & & \\
  \noalign{\smallskip}\hline\noalign{\smallskip}

  \multirow{2}*{4}& \multirow{2}*{4499.60}& \multirow{2}*{4.012} & 553.31 & 3.990& 3.25e-5&24(1281)&716.81&3.983& \multirow{2}*{717.42}& \multirow{2}*{3.986}&\multirow{2}*{9.53e-5}&\multirow{2}*{2400}&\multirow{2}*{891.34}&\multirow{2}*{3.986}\\
  & & & 370.61 & 3.994& 9.70e-5&18(860)& 535.51 & 3.985& & & & & & \\
  \noalign{\smallskip}\hline\noalign{\smallskip}

  \multirow{2}*{5}& \multirow{2}*{4172.91}& \multirow{2}*{3.912} & 434.40 & 3.877& 2.95e-5&24(1184)&568.99&3.873& \multirow{2}*{562.60}& \multirow{2}*{3.877}&\multirow{2}*{9.45e-5}&\multirow{2}*{2200}&\multirow{2}*{706.58}&\multirow{2}*{3.877}\\
  & & & 310.87 & 3.878& 9.06e-5&19(842)& 445.72 & 3.874& & & & & & \\
  \noalign{\smallskip}\hline\noalign{\smallskip}

  \multirow{2}*{6}& \multirow{2}*{4171.47}& \multirow{2}*{5.029} & 437.37 & 4.997& 3.25e-5&24(1214)&572.74&4.993& \multirow{2}*{557.75}& \multirow{2}*{4.995}&\multirow{2}*{9.54e-5}&\multirow{2}*{2200}&\multirow{2}*{703.30}&\multirow{2}*{4.997}\\
  & & & 317.84 & 4.998& 8.14e-5&19(878)& 452.94 & 4.994& & & & & & \\
  \noalign{\smallskip}\hline\noalign{\smallskip}

  \multirow{2}*{7}& \multirow{2}*{3914.88}& \multirow{2}*{3.672} & 516.11 & 3.647& 2.75e-5&24(1269)&670.28&3.642& \multirow{2}*{737.58}& \multirow{2}*{3.641}&\multirow{2}*{9.30e-5}&\multirow{2}*{2600}&\multirow{2}*{904.76}&\multirow{2}*{3.641}\\
  & & & 354.05 & 3.648& 8.61e-5&18(867)& 507.92 & 3.643& & & & & & \\
  \noalign{\smallskip}\hline\noalign{\smallskip}

  \multirow{2}*{8}& \multirow{2}*{5855.87}& \multirow{2}*{4.526} & 448.02 & 4.497& 3.47e-5&25(1264)&575.95&4.492& \multirow{2}*{698.46}& \multirow{2}*{4.494}&\multirow{2}*{9.16e-5}&\multirow{2}*{2800}&\multirow{2}*{835.29}&\multirow{2}*{4.491}\\
  & & & 325.18 & 4.497& 8.38e-5&20(912)& 453.29 & 4.493& & & & & & \\
  \noalign{\smallskip}\hline\noalign{\smallskip}

  \multirow{2}*{9}& \multirow{2}*{6308.66}& \multirow{2}*{3.230} & 552.55 & 3.193& 2.38e-5&24(1231)&723.67&3.189& \multirow{2}*{678.52}& \multirow{2}*{3.192}&\multirow{2}*{9.75e-5}&\multirow{2}*{2200}&\multirow{2}*{862.51}&\multirow{2}*{3.192}\\
  & & & 345.52 & 3.201& 9.47e-5&17(763)& 516.48 & 3.190& & & & & & \\
  \noalign{\smallskip}\hline\noalign{\smallskip}

  \multirow{2}*{10}& \multirow{2}*{5186.14}& \multirow{2}*{3.056} & 465.19 & 3.028& 2.70e-5&24(1205)&604.51&3.026& \multirow{2}*{643.69}& \multirow{2}*{3.032}&\multirow{2}*{9.08e-5}&\multirow{2}*{2400}&\multirow{2}*{795.67}&\multirow{2}*{3.030}\\
  & & & 312.19 & 3.035& 9.99e-5&18(807)& 453.43 & 3.027& & & & & & \\
  \noalign{\smallskip}\hline\noalign{\smallskip}
 \end{tabular}
 \end{sidewaystable}

\end{document}